\newcommand\Prob{\mathcal P}
\newcommand\dd{\mathrm d}
\newcommand\Gin{\mathsf{Gin}}
\newcommand\GUE{\mathsf{GUE}}
\newcommand\LUE{\mathsf{LUE}}
\newcommand\mG{{\sigma^{\Gin}_n}}
\newcommand\mH{{\sigma^{\GUE}_n}}
\newcommand\mL{{\sigma^{\LUE}_n}}
\newcommand\lG{\dE\mu_{A^{\Gin}_n}}
\newcommand\lH{\dE\mu_{A^{\GUE}_n}}
\newcommand\lL{\dE\mu_{A^{\LUE}_n}}
\newcommand\oH{{\widetilde H}}
\newcommand\FT[1]{\operatorname{\cF}_{#1}}
\newcommand\CC{\mathbb{C}}
\newcommand\NN{\mathbb{N}}
\newcommand\RR{\mathbb{R}}
\newcommand\ZZ{\mathbb{Z}}
\newcommand\dE{\mathbb{E}}
\newcommand\cE{\mathcal{E}}
\newcommand\cF{\mathcal{F}}
\newcommand\cN{\mathcal{N}}
\newcommand\rB{\mathrm{B}}
\DeclareMathOperator\EE{\dE}
\DeclareMathOperator\En{\cE}
\DeclareMathOperator\ee{e}
\DeclareMathOperator\var{Var}
\DeclareMathOperator\Tr{Tr}
\DeclareMathOperator*\argmin{arg\,min}
\newcommand\ii{\mathrm{i}}
\newcommand\II{\mathds1}
\newcommand\ceq{:=}
\let\originalleft\left
\let\originalright\right
\renewcommand\left{\mathopen{}\mathclose\bgroup\originalleft}
\renewcommand\right{\aftergroup\egroup\originalright}
\numberwithin{equation}{section}%
\title[Monotonicity of the logarithmic energy for random matrices]%
{Monotonicity of the logarithmic energy\\for random matrices}
\author{Djalil Chafa\"{\i}}
\address[DC]{DMA, \'Ecole normale sup\'erieure -- PSL, 45 rue d'Ulm, F-75230 Cedex 5 Paris, France}
\email{\url{mailto:djalil@chafai.net}}
\urladdr{\url{https://djalil.chafai.net/}}
\author{Benjamin Dadoun}%
\address[BD]{Mathematics, Division of Science, New York University Abu Dhabi, UAE}%
\email{\url{mailto:benjamin.dadoun@gmail.com}}%
\urladdr{\url{http://benjamin.dadoun.free.fr/}}%
\author{Pierre Youssef}%
\address[PY]{Mathematics, Division of Science, New York University Abu Dhabi, UAE \& Courant
Institute of Mathematical Sciences, New York University, 251 Mercer st, New York,
NY 10012, USA}%
\email{\url{mailto:yp27@nyu}}%
\urladdr{\url{https://wp.nyu.edu/pyoussef/}}%
\date{Autumn 2022, revised Winter 2024, accepted Spring 2024, to appear in
  RMTA, compiled \today}
\keywords{Random matrices; Entropy; Variational analysis; Logarithmic energy}
\subjclass[2000]{60B20, 15B52}
\newtheorem{theorem}{Theorem}[section]%
\newtheorem{prob}{Question}[section]%
\newtheorem{lemma}[theorem]{Lemma}%
\theoremstyle{remark}
\begin{document}
\begin{abstract}
    It is well-known that the semi-circle law, which is the limiting distribution in the Wigner theorem, is the minimizer of the logarithmic energy penalized by the second moment. A very similar fact holds for the Girko and Marchenko--Pastur theorems.
    In this work, we shed the light on an intriguing phenomenon suggesting that this functional is monotonic along the mean empirical spectral distribution in terms of the matrix dimension. This is reminiscent of the monotonicity of the Boltzmann entropy along the Boltzmann equation, the monotonicity of the free energy along ergodic Markov processes, and the Shannon monotonicity of entropy or free entropy along the classical or free central limit theorem.
    While we only verify this monotonicity phenomenon for the Gaussian unitary ensemble, the complex Ginibre ensemble, and the square Laguerre unitary ensemble, numerical simulations suggest that it is actually more universal.
    We obtain along the way explicit formulas of the logarithmic energy of the models which can be of independent interest. 
\end{abstract}
\maketitle

\section{Introduction}

The famous Boltzmann H-theorem asserts that the Boltzmann entropy is monotonic in time along the Boltzmann equation which describes the evolution of a probability measure, see for instance~\cite{zbMATH05504088}. The second moment is conserved by the dynamics, and the long-time or at-equilibrium behavior, which is a Gaussian, is the optimizer of the entropy under the conserved constraint. This variational phenomenon is also present for the Kolmogorov evolution equation of ergodic Markov processes through the free energy, which is a Boltzmann entropy penalized by an average energy, see~\cite{zbMATH06542887} and references therein.
Following Shannon, the phenomenon is also present in the classical and free central limit theorems, in which the conservation law is the second moment and the monotonic functional the Boltzmann or the Voiculescu entropy respectively, see for instance~\cite{Artstein04,Madiman07,Shlyakhtenko07} and~\cite{Courtade16,Tao20,Dadoun21}.

Our aim in this work is to initiate the investigation of whether or not similar phenomena arise in classical random matrix limiting theorems such as Wigner semi-circle theorem, Girko circular law theorem, or the Marchenko--Pastur one, see for instance~\cite{zbMATH05645278} for an introduction to random matrix theory. 
For an~$n\times n$ matrix~$A$ with entries in~$\CC$, we denote by
$\lambda_1(A),\ldots,\lambda_n(A)$ its eigenvalues, which are the roots in~%
$\CC$ of its characteristic polynomial
$z\mapsto\det(A-z\mathrm{Id}_n)$, viewed as a multiset of~$n$ elements. We
denote the empirical spectral distribution by 
\[
\mu_A\ceq\frac{1}{n}\sum_{k=1}^n\delta_{\lambda_k(A)}.
\]
Let us first start by looking at the Wigner semi-circular theorem which states that given a Hermitian random matrix $M_n=(\xi_{ij})_{1\leq i,j\leq n}$ in which the upper-triangular entries are independent, identically distributed centered complex random variables with unit variance, while the diagonal entries are i.i.d.\ real random variables with bounded mean and variance, the sequence of (random) empirical spectral distributions  
$
(\mu_{\frac{1}{\sqrt{n}}M_n})_{n\in \NN}
$
 converges almost surely to the Wigner semi-circular distribution given by
\[
\mu_{\mathrm{sc}}:=\frac{1}{2\pi}\sqrt{{(4-x^2)}_+}\,\dd x. 
\]
In analogy with what was discussed previously, we may ask if this convergence is driven by similar phenomena in the sense that there is some characterizing functional which is monotone in terms of the matrix dimension~$n$. 
The choice of such functional could be guided by two considerations: a variational characterization and a large deviation principle.
This naturally leads us in this context to look at the logarithmic energy, defined below,
which is closely related to Voiculescu's free entropy. Indeed, Ben Arous and Guionnet~\cite{BenArous97} showed 
in particular that Gaussian Wigner matrices, and more generally matrices drawn from the $\beta$-ensemble,
satisfy a large deviation principle with rate function related to the logarithmic energy. Moreover, it is known that Wigner semi-circular law can be characterized through a variational principle involving the logarithmic energy.

Let $\mathcal{P}_2(K)$ be the set of probability measures on~$K$ where~%
$K=\RR$ or~$K=\CC$, with finite second moment
$m_2\ceq\int|x|^2\dd\mu(x)<\infty$. The logarithmic energy
$\En\colon\Prob_2(K)\to(-\infty,\infty]$ is defined by
\begin{equation*}
  \En(\mu)
  \ceq\iint\log\frac{1}{|x-y|}\,\mu(\dd x)\mu(\dd y),
\end{equation*}
which makes sense since
$(x,y)\in\CC^2\mapsto\frac{|x|^2+|y|^2}{2}-\log|x-y|$ is bounded below
by a finite constant. When~$K=\RR$,
then $-{\En}$ is the Voiculescu free entropy of free probability theory~\cite{Voiculescu05,Hiai00}. 

For~$\lambda>0$, the minimization of~$\En$ over
$\mathcal{P}_2(K)$ under the constraint $m_2(\mu)=\lambda$ is equivalent to
the minimization over $\mathcal{P}_2(K)$ of the penalized logarithmic energy
\begin{equation*}
  \mu\mapsto\En_{\lambda}^{(2)}(\mu)
  \ceq\frac{1}{2\lambda}m_2(\mu)+\En(\mu).
\end{equation*}
Indeed, by using
$\En(\mu)=-\frac{1}{2}\log\tfrac{\alpha}{\lambda}+\En(\mathrm{dil}_{\sqrt{\frac{\lambda}{\alpha}}}\mu)$
and
$m_2(\mathrm{dil}_{\sqrt{\frac{\alpha}{\lambda}}}\mu)=\frac{\alpha}{\lambda}m_2(\mu)$,
we get
\begin{align*}
 \min_{\mu}\En_{\lambda}^{(2)}(\mu)&=\min_{\alpha>0}\left(\frac{\alpha}{2\lambda}+\min_{m_2(\mu)=\alpha}\En(\mu)\right)\\
	&=\min_{\alpha>0}\left(\frac{\alpha}{2\lambda}-\frac{1}{2}\log\frac{\alpha}{\lambda}\right)%
 +\min_{m_2(\mu)=\lambda}\En(\mu)\\
 &=\frac{1}{2}+\min_{m_2(\mu)=\lambda}\En(\mu).
\end{align*}
The functional $\En_{\lambda}^{(2)}$ is strictly convex, lower semi-continuous
with compact sub-level-sets\footnote{With respect to the weak convergence of
  probability measures for continuous and bounded test functions.}, see~\cite{BenArous97,Saff97,Hiai00}. 
With these definitions in hand, the semi-circular distribution is characterized by 
\[
\mu_{\mathrm{sc}} =\argmin_{\substack{\mu\in\mathcal{P}_2(\RR)\\m_2(\mu)=1}}\En
  =\argmin_{\mu\in\mathcal{P}_2(\RR)}\En_1^{(2)}.
\]

This leads us to investigate whether $\En_1^{(2)}$ is monotone along Wigner semi-circular theorem. Since the corresponding measures are random in this case, a more suitable formulation would be to look at the expected empirical measures which corresponds to the weak form of Wigner's theorem, i.e., when the convergence of the sequence of empirical measures is considered in expectation. Then the goal of this paper is in particular to investigate the following question:

\begin{prob}\label{qu: semi-circle}
Let~$M_n$ be a random Wigner matrix, $\mu_n\ceq\mu_{\frac{1}{\sqrt{n}} M_n}$ be the corresponding empirical spectral distribution and suppose that $m_2(\dE\mu_n)=1$. Is it true that $\En_1^{(2)}(\dE\mu_n)\searrow \En_1^{(2)}(\mu_{\mathrm{sc}})$?
\end{prob}

As stated, the above problem would be impossible to study in full generality as considerations on the finiteness of $\En_1^{(2)}(\dE\mu_n)$ for every~$n$ would enter into play. A potential modification would be to additionally impose that $\En_1^{(2)}(\dE\mu_n)$ is uniformly bounded. However, this would rule out several models of random matrices such as Rademacher random matrices for which simulations suggest a positive answer to Question~\ref{qu: semi-circle} provided one conditions on an appropriately chosen event (see Figure~\ref{fig:wigner}). 
The fascinating aspect in the above problem is that the monotonicity is considered at every fixed finite dimension and not just in the large dimension regime. This is related to the increment $\mathbb{E}(\mu_{n+1}-\mu_n)$ as well as to  the ``gradient'' of the convex functional $\En_1^{(2)}$, which is a logarithmic potential.
 
A classical random matrix model for which the above technical considerations are not needed is the Gaussian Unitary Ensemble. An~$n\times n$ random Hermitian matrix $A^{\GUE}_n$ belongs to
the normalized Gaussian unitary ensemble, if its density is proportional to 
$A\mapsto\ee^{-\frac{n}{2}\Tr(A^2)}$. We have that 
\begin{equation*}
  m_2(\lH)=1
  \quad\text{and}\quad
  \lH
  \xrightarrow[n\to\infty]{\mathrm{weak}}
  \frac{{\sqrt{(4-x^2)}_+}}{2\pi}\,\dd x.
\end{equation*}

Our first result states that Question~\ref{qu: semi-circle} has a positive solution in this case. 

\begin{theorem}[Monotonicity and convexity for Gaussian unitary
  ensemble]\label{th:GUE}
  For every~$n\geq1$, 
  \[
    \En_1^{(2)}(\lH)
    =\frac{3}{4}+\frac{1}{2}\left(\log n+\gamma+\frac{1}{2n}-H_n\right),
  \]
  where $\gamma\ceq0.577\ldots$ denotes the Euler constant and $H_n\ceq\sum_{k=1}^n\frac{1}{k}$ is the $n$-th harmonic number. 
  In particular
  $n\in\{1,2,\ldots\}\mapsto\En_{1}^{(2)}(\lH)$
  is strictly decreasing and strictly convex.
\end{theorem}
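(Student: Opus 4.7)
Since $m_2(\lH) = 1$, we have $\En_1^{(2)}(\lH) = \tfrac{1}{2} + \En(\lH)$, so the task reduces to computing $\En(\lH)$ explicitly. The plan is to exploit the Christoffel--Darboux representation: for the weight $\ee^{-nx^2/2}\,\dd x$ the density of $\lH$ reads $\rho_n(x) = \tfrac{1}{n}\sum_{k=0}^{n-1}p_k^{(n)}(x)^2\,\ee^{-nx^2/2}$, with $p_k^{(n)}$ the orthonormal polynomials for this weight. The affine rescaling $y \ceq x\sqrt{n/2}$ converts $\lH$ into $\nu_n(\dd y) \ceq \tfrac{1}{n}\sum_{k=0}^{n-1}\phi_k(y)^2\,\dd y$, where $\phi_k$ denotes the $k$-th standard Hermite wavefunction (orthonormal in $L^2(\RR)$). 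The dilation identity for $\En$ then gives $\En(\lH) = \En(\nu_n) + \tfrac{1}{2}\log(n/2)$, so all the work lies in computing $\En(\nu_n)$.

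Expanding $\En(\nu_n) = -\tfrac{1}{n^2}\sum_{j,k=0}^{n-1}L_{jk}$ with $L_{jk} \ceq \iint \log|y-z|\,\phi_j(y)^2\phi_k(z)^2\,\dd y\,\dd z$, the crux is a closed form for $L_{jk}$. I would begin from the Frullani identity $\log|z| = \tfrac{1}{2}\int_0^\infty t^{-1}(\ee^{-t} - \ee^{-tz^2})\,\dd t$ and combine Plancherel with the Fourier-transformed Mehler formula
\[
\sum_{k \geq 0}\widehat{\phi_k^2}(\xi)\,t^k = \frac{1}{1-t}\exp\Bigl(-\frac{\xi^2(1+t)}{4(1-t)}\Bigr).
\]
One Gaussian integration then yields the bivariate generating identity
\[
\sum_{j,k \geq 0}\Bigl(\iint \ee^{-t(y-z)^2}\phi_j(y)^2\phi_k(z)^2\,\dd y\,\dd z\Bigr)\, s^j r^k = \bigl((1-s)(1-r)\bigl[(1-s)(1-r) + 2t(1-sr)\bigr]\bigr)^{-1/2}.
\]
Integrating in $t$ against $\dd t / t$ via the auxiliary formula $\int_0^\infty t^{-1}\bigl(\ee^{-t} - (1+ct)^{-1/2}\bigr)\dd t = \log(c/4) - \gamma$ (obtained by writing $(1+ct)^{-1/2}$ as a Gamma$(\tfrac{1}{2},1)$-mixture of exponentials and applying Frullani) produces
\[
\sum_{j,k \geq 0} L_{jk}\, s^j r^k = \frac{1}{2(1-s)(1-r)}\Bigl[\log\frac{1-sr}{2(1-s)(1-r)} - \gamma\Bigr],
\]
whose power-series expansion gives the tidy closed form $L_{jk} = \tfrac{1}{2}\bigl(H_{\max(j,k)} - \gamma - \log 2\bigr)$.

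The remainder is bookkeeping. A standard counting argument gives $\sum_{j,k=0}^{n-1}f(\max(j,k)) = \sum_{k=0}^{n-1}(2k+1)f(k)$, and exchanging summations yields $\sum_{j,k=0}^{n-1}H_{\max(j,k)} = n^2 H_{n-1} - \binom{n}{2}$. Substituting these into $\En_1^{(2)}(\lH) = \tfrac{1}{2} + \En(\nu_n) + \tfrac{1}{2}\log(n/2)$ and using $H_{n-1} = H_n - 1/n$ produces the claimed formula.

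Finally, writing $g(n) \ceq \log n + \gamma + \tfrac{1}{2n} - H_n$, both monotonicity properties are elementary. The first difference equals $\log(1 + 1/n) - \tfrac{1}{2}(1/n + 1/(n+1))$, which is strictly negative because the trapezoidal rule strictly overshoots $\int_n^{n+1}\dd x / x$ (strict convexity of $x \mapsto 1/x$). The second difference simplifies to $\log(1 - 1/n^2) + 1/(n^2 - 1) = \sum_{k\geq 2}(1 - 1/k)\,n^{-2k}$, which is strictly positive for $n \geq 2$. The main obstacle is the second step above: setting up the double integration carefully, executing the Fourier and Frullani manipulations, and spotting the strikingly simple closed form for $L_{jk}$; once that is in hand, everything downstream cascades.
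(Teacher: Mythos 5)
Your proposal is correct, and it reaches the formula by a genuinely different route than the paper. The paper first linearizes ${h_k}^2$ via Feldheim's identity and the hockey-stick identity to rewrite the level density as a single combination of $h_{2k}$'s (Lemma~\ref{lem:GUE density}), then evaluates the heat-kernel--smoothed double integral term by term using the convolution theorem, a scaling formula for Hermite polynomials, orthogonality, and Gauss's ${}_2F_1$ summation (Lemma~\ref{lem:integral-mh}), and finally needs a separate finite-difference combinatorial identity (Lemma~\ref{lem:calcul-GUE}) to collapse the resulting double sum. You instead keep the double sum over pairs of Hermite wavefunctions and package all indices at once through Mehler's generating function, so that the $t$-smoothed interaction becomes a single two-dimensional Gaussian integral; I checked that this does give $\bigl((1-s)(1-r)\bigl[(1-s)(1-r)+2t(1-sr)\bigr]\bigr)^{-1/2}$, that your auxiliary Frullani integral is correct (it is $\log c+\psi(\tfrac12)$ with $\psi(\tfrac12)=-\gamma-2\log2$), and that extracting coefficients from the resulting bivariate generating function indeed yields $L_{jk}=\tfrac12\bigl(H_{\max(j,k)}-\gamma-\log2\bigr)$ (consistent at $j=k=0$ with the log-energy of a Gaussian, and summing to $n^2H_n-\tfrac{n(n+1)}2$ over $0\le j,k<n$, which reproduces the theorem). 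Your route buys a per-pair closed form for the logarithmic interaction of Hermite wavefunctions --- arguably of independent interest and more structural than the paper's term-by-term computation --- while avoiding Feldheim's identity, the ${}_2F_1$ evaluation, and the finite-difference lemma; the price is that the interchanges of $\sum_{j,k}$ with the $t$-integral (the integrand $e^{-t}-G_{jk}(t)$ is not of one sign) and the coefficient extraction for $|s|,|r|<1$ need a few lines of routine justification that a full write-up should supply. The monotonicity and convexity arguments at the end are the same as the paper's (trapezoidal overshoot for the first difference) or an equivalent series expansion for the second difference, and both are correct.
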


Note that we refer to a real sequence ${(u_n)}_{n\geq1}$ as (strictly) convex when $n\mapsto u_{n+1}-u_n$ is (strictly) increasing. While our motivation in this paper consists in capturing the monotonicity phenomenon, we believe that having a closed form of the logarithmic energy of a GUE matrix is interesting in its own right. We are unaware of the existence of any such formulas in the literature. These explicit formulas are a new and intriguing manifestation of the determinantal integrability of GUE.

We now turn to another classical random matrix limiting theorem which is the circular law one. As before, our starting point is the following variational characterization of the circular law 
\[ 
  \mu_{\mathrm{circ}}\ceq\frac{\mathds{1}_{|z|\leq1}}{\pi}\,\dd z
  =\argmin_{\substack{\mu\in\mathcal{P}_2(\CC)\\m_2(\mu)=\frac{1}{2}}}\En
  =\argmin_{\mu\in\mathcal{P}_2(\CC)}\En_{\frac{1}{2}}^{(2)}.
\]

 In analogy to our previous discussion, we are led to investigate whether $\En_{\frac12}^{(2)}$ is monotone along the circular law theorem, and thus we formulate the following question: 
 
 \begin{prob}\label{qu: circle}
 Let $M_n\ceq(\xi_{ij})_{1\leq i,j\leq n}$ be an~$n\times n$ random matrix whose entries are i.i.d.\ complex random variables of unit variance and denote by $\mu_n\ceq\mu_{\frac{1}{\sqrt{n}} M_n}$ the corresponding empirical spectral distribution. Is it true that $\En_{\frac12}^{(2)}(\dE\mu_n)\searrow \En_{\frac12}^{(2)}(\mu_{\mathrm{circ}})$?
 \end{prob}

Let $A^{\Gin}_n$ be a random~$n\times n$ matrix drawn from the complex Ginibre ensemble, with density proportional to $A\mapsto\ee^{-n\Tr(AA^*)}$ where $A^*\ceq\overline{A}^\top$ is the conjugate-transpose. Note that (see~\eqref{eq: m2-gin})
\begin{equation*}
  m_2(\lG)=\frac{1}{2}+\frac{1}{2n}
  \quad\text{and}\quad
  \lG
  \xrightarrow[n\to\infty]{\mathrm{weak}}
\mu_{\mathrm{circ}}.
\end{equation*}

The next theorem states that Question~\ref{qu: circle} has a positive solution in this case.

\begin{theorem}[Monotonicity and convexity for complex Ginibre ensemble]\label{th:Ginibre}
  For every~$n\geq1$, 
  \begin{align*}
    \En_{\frac{1}{2}}^{(2)}(\lG)
    &=\frac{3}{4}
    +\frac{1}{2}\left(\log n+\gamma+\frac{1}{2n}-H_n\right)
      +\frac{1}{2}\sum_{k=n+1}^\infty\frac{4^{-k}\binom{2k}k}{k(k-1)}\\
    &=\En_{1}^{(2)}(\lH)
      +\frac{1}{2}\sum_{k=n+1}^\infty\frac{4^{-k}\binom{2k}k}{k(k-1)},    
  \end{align*}
  in particular
  $n\in\{1,2,\ldots\}\mapsto\En_{\frac{1}{2}}^{(2)}(\lG)$
  is strictly decreasing and strictly convex.
\end{theorem}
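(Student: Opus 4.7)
The plan is to establish an explicit formula for $\En(\lG)$ and then combine it with $m_2(\lG)=\tfrac12+\tfrac{1}{2n}$ and Theorem~\ref{th:GUE} to obtain the announced identity, since $\En_{\frac12}^{(2)}(\lG)=m_2(\lG)+\En(\lG)$. First, I exploit the rotational invariance of $\lG$: because $A^{\Gin}_n$ is unitarily invariant, $\lG$ is a rotation-invariant probability measure on $\CC$, and the Poisson--Jensen identity $\int_0^{2\pi}\log|re^{\ii\theta}-s|\,\frac{\dd\theta}{2\pi}=\log\max(r,s)$ reduces the double energy integral to
\[
    \En(\lG)=-\iint_{[0,\infty)^2}\log\max(r,s)\,\nu(\dd r)\,\nu(\dd s),
\]
where $\nu$ is the pushforward of $\lG$ under $z\mapsto|z|$.

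Next, I invoke Kostlan's theorem, which identifies the multiset $\{|\lambda_k(A^{\Gin}_n)|^2\}_{k=1}^n$ in distribution with $\{Y_k/n\}_{k=1}^n$, where $Y_k\sim\Gamma(k,1)$ are independent. Thus $\nu$ is the equi-weighted mixture of the laws of $\sqrt{Y_k/n}$ for $k=1,\dots,n$, and the previous display becomes
\[
    \En(\lG)=\tfrac12\log n-\frac{1}{2n^2}\sum_{i,j=1}^n \EE\bigl[\log\max(Y_i,Y_j')\bigr],
\]
with $Y_i\sim\Gamma(i,1)$ and $Y_j'\sim\Gamma(j,1)$ independent.

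The heart of the argument is the explicit evaluation of this double sum. Each summand can be written as $\EE[\log Y_i\cdot P_j(Y_i)]+\EE[\log Y_j'\cdot P_i(Y_j')]$, where $P_k(t)=1-e^{-t}\sum_{\ell=0}^{k-1}t^\ell/\ell!$ denotes the $\Gamma(k,1)$ CDF; expanding and invoking the classical identity
\[
    \int_0^\infty t^m e^{-2t}\log t\,\dd t = \frac{m!}{2^{m+1}}\bigl[\psi(m+1)-\log 2\bigr],
\]
each expectation reduces to a finite linear combination of digamma values with rational-binomial coefficients. Reindexing and telescoping the resulting triple sum over $(i,j,\ell)$, one separates a \emph{diagonal} contribution that reproduces $\En(\lH)$ from Theorem~\ref{th:GUE} and a \emph{remainder} contribution which, after applying $\binom{2k-1}{k-1}=\tfrac12\binom{2k}{k}$ and recognizing the generating function $\sum_k 4^{-k}\binom{2k}{k}x^k=(1-x)^{-1/2}$, collapses to $\tfrac12\sum_{k\geq n+1}4^{-k}\binom{2k}{k}/(k(k-1))$. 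This combinatorial bookkeeping is where I expect the main technical difficulty to lie.

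Once the formula is in hand, the monotonicity and strict convexity of $n\mapsto\En_{\frac12}^{(2)}(\lG)$ follow at once. By Theorem~\ref{th:GUE}, $n\mapsto\En_1^{(2)}(\lH)$ is strictly decreasing and strictly convex. For the tail $b_n\ceq\tfrac12\sum_{k\geq n+1}c_k$ with $c_k\ceq 4^{-k}\binom{2k}{k}/(k(k-1))>0$, the elementary ratio $c_{k+1}/c_k=(2k+1)(k-1)/(2(k+1)^2)$ is strictly less than one for $k\geq 2$, so $(c_k)_{k\geq 2}$ is strictly decreasing; consequently $(b_n)$ is itself strictly decreasing and strictly convex. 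The sum of two strictly decreasing and strictly convex sequences is strictly decreasing and strictly convex, which completes the deduction.
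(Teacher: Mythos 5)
Your setup is sound and, up to bookkeeping, coincides with the paper's own starting point: the angular average $\int_0^{2\pi}\log\lvert r\ee^{\ii\theta}-s\rvert\,\frac{\dd\theta}{2\pi}=\log\max(r,s)$ reduces $\En(\lG)$ to a radial double integral, and your Kostlan mixture $\{Y_k/n\}$ with $Y_k\sim\Gamma(k,1)$ is exactly equivalent to the level density used in the paper, since the equi-weighted mixture of the $\Gamma(k,1)$ densities for $k=1,\dots,n$ is $\frac1n\sum_{j=0}^{n-1}\frac{r^j\ee^{-r}}{j!}=\frac{\Gamma(n,r)}{n!}$. Your closing argument for the tail $b_n$ via the ratio $c_{k+1}/c_k=(2k+1)(k-1)/(2(k+1)^2)<1$ is correct, and is in fact a slightly cleaner route to strict convexity than the paper's direct second-difference computation.

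The gap is the middle step, which is where the entire content of the theorem lives. You reduce the problem to evaluating $\sum_{i,j}\EE[\log\max(Y_i,Y_j')]$, propose to expand the Gamma CDF and use $\int_0^\infty t^m\ee^{-2t}\log t\,\dd t=\frac{m!}{2^{m+1}}(\psi(m+1)-\log2)$, and then assert that reindexing and telescoping separates a diagonal piece equal to $\En(\lH)$ and a remainder equal to $\frac12\sum_{k>n}4^{-k}\binom{2k}{k}/(k(k-1))$ --- while explicitly conceding that this is where the difficulty lies. That assertion is reverse-engineered from the target formula and is nowhere established: nothing in your setup makes the appearance of the GUE energy, or of the central-binomial tail, evident. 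The paper's evaluation is not a routine telescoping. It first converts the radial double integral, via the identity $\int_0^x\!\int_0^y\log\max(r,s)\,\dd r\,\dd s=\tfrac12\min(x,y)^2+xy(\log\max(x,y)-1)$, into an expectation over a \emph{single} pair of i.i.d.\ $\Gamma(n+1,1)$ variables, namely $\tfrac14\EE[\log(B_n\ee^{-B_n})]$ with $B_n=\min(X/Y,Y/X)$; it then uses the Beta-prime law of $X/Y$, an incomplete Beta integral, the Newton series $(1-4v)^{-1/2}=\sum_k\binom{2k}{k}v^k$, and two nontrivial summation identities (Gauss's ${}_2F_1$ theorem and a Gould convolution identity) to reach the closed form. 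You would need either to reproduce an argument of comparable substance along your digamma route, or to carry out your triple sum in full; as written, the explicit formula --- and hence the theorem --- is not proved.
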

The fact that the (penalized) logarithmic energy of the complex Ginibre ensemble can be expressed in terms of that of Gaussian Unitary Ensemble is intriguing. It would be interesting to understand the meaning of the additional term that appears in the above expression.

Finally, we consider the square Marchenko--Pastur law given by $\mu_{\mathrm{MP}}\ceq \frac{{\sqrt{(4-x)}_+}}{2\pi \sqrt{x}}\,\dd x$. This law appears in particular as the limiting spectral distribution of $\frac{1}{n}X_nX_n^*$ where $X_n=(\xi_{ij})_{1\leq i,j\leq n}$ is an~$n\times n$ random matrix whose entries are i.i.d.\ centered complex random variables of unit variance. The starting point is again a variational characterization for $\mu_{\mathrm{MP}}$ (see~\cite[Proposition~5.5.13]{Hiai00}):
\[
\mu_{\mathrm{MP}}= \argmin_{\substack{\mu\in\mathcal{P}_2(\RR_+)\\m_1(\mu)=1}}\En
  =\argmin_{\mu\in\mathcal{P}_2(\RR_+)}\En^{(1)},
\]
where 
\[
\En^{(1)}(\mu) \ceq \En(\mu)+m_1(\mu).
\]
In view of this, we state the following question:

 \begin{prob}\label{qu: MP}
Let $X_n\ceq(\xi_{ij})_{1\leq i,j\leq n}$ be an~$n\times n$ random matrix whose entries are i.i.d.\ centered complex random variables of unit variance, and let $\mu_n\ceq\mu_{\frac{1}{n}X_nX_n^*}$ be the corresponding spectral distribution. Is it true that $\En^{(1)}(\dE\mu_n)\searrow \En^{(1)}(\mu_{\mathrm{MP}})$?
 \end{prob}
 
 Note that we only stated the square version of the problem.  A straightforward generalization of the above would be to consider rectangular matrices for which the ratio of the two dimensions is constant. It is conceivable that one can study the rectangular case in more generality where two dimensional parameters will enter into play. It is however unclear at this stage how the monotonicity property would be measured in terms of both dimensions.

 In this paper, we will show that Question~\ref{qu: MP} has a positive solution in the case of (square) Laguerre Unitary Ensemble. More precisely, set $A_n^{\LUE}\ceq(A_n^{\Gin}){(A_n^{\Gin})}^*$ and note that 
 \begin{equation*}
  m_1(\lL)=1
  \quad\text{and}\quad
  \lL
  \xrightarrow[n\to\infty]{\mathrm{weak}}
  \mu_{\mathrm{MP}}.
\end{equation*}

 \begin{theorem}[Monotonicity and convexity for square Laguerre Unitary ensemble]\label{th:laguerre}
For every~$n\geq 1$, 
\begin{align*}
    \En^{(1)}(\lL)
    &=\frac32+\left(\log n+\gamma+\frac1{2n}-H_n\right)\\
    &=2\En_{1}^{(2)}(\lH),    
  \end{align*}
  in particular
  $n\in\{1,2,\ldots\}\mapsto\En^{(1)}(\lL)$
  is strictly decreasing and strictly convex.
  \end{theorem}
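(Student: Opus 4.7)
My plan is to prove the identity $\En^{(1)}(\lL) = 2\,\En_1^{(2)}(\lH)$; the closed-form expression then follows directly from Theorem~\ref{th:GUE}. Since $m_1(\lL)=1$ and $m_2(\lH)=1$, this identity is equivalent to
\[
\En(\lL) = 2\,\En(\lH).
\]

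A useful intermediate step exploits the correspondence between LUE eigenvalues and Ginibre singular values: the eigenvalues of $A_n^{\LUE}$ are $s_i^2$, where $s_1,\ldots,s_n$ are the singular values of $A_n^{\Gin}$. Let $\nu$ be the pushforward of $\lL$ under $x\mapsto\sqrt{x}$, supported on $\RR_+$, and let $\tilde\nu\ceq\frac{1}{2}(\nu+\nu_-)$ be its symmetrization about $0$, with $\nu_-$ the reflection of $\nu$. Substituting $x=s^2$ and $y=t^2$ in the integral defining $\En(\lL)$, and using $\log|s^2-t^2|=\log|s-t|+\log|s+t|$ together with the parity of $\tilde\nu$, a short computation yields
\[
\En(\lL) = 2\,\En(\tilde\nu),
\]
where $\tilde\nu$ has density $s\mapsto|s|\,\rho_n^{\LUE}(s^2)$ on $\RR$. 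The claim thus reduces to the striking identity $\En(\tilde\nu)=\En(\lH)$, which is genuinely nontrivial because the measures themselves do not coincide: at $n=1$ their densities are $|s|\,e^{-s^2}$ versus $(2\pi)^{-1/2}e^{-s^2/2}$.

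To establish $\En(\tilde\nu)=\En(\lH)$, I would mimic, step by step, the determinantal computation underlying Theorem~\ref{th:GUE}. Expanding the density of $\lL$ in its Laguerre sum-of-squares form $\rho_n^{\LUE}(x)=e^{-nx}\sum_{k=0}^{n-1}L_k(nx)^2$ and unfolding $\En(\tilde\nu)$ accordingly, one obtains a finite double sum of integrals of the shape $\iint \log|s-t|\,p_j(s)^2\,p_k(t)^2\,w(s)w(t)\,ds\,dt$, where $p_k$ is a normalized Laguerre polynomial evaluated at the squared variable and $w$ is the associated weight. Each such integral can be evaluated via classical orthogonal-polynomial identities combined with the digamma formula $\psi(k+1)=-\gamma+H_k$, which converts logarithmic moments into harmonic sums. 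Collecting the contributions produces the closed form $\En(\lL)=\frac{1}{2}+\log n+\gamma+\frac{1}{2n}-H_n$, whence
\[
\En^{(1)}(\lL) = \frac{3}{2}+\log n+\gamma+\frac{1}{2n}-H_n = 2\,\En_1^{(2)}(\lH).
\]

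The principal obstacle is the combinatorial bookkeeping in these Laguerre-integral evaluations: the logarithmic kernel is singular on the diagonal, and producing the clean harmonic-number cancellation requires careful use of generating-function identities or related special-function manipulations, in parallel to what is needed in Theorem~\ref{th:GUE}. Once the closed form is in hand, the strict decrease and strict convexity of $n\mapsto\En^{(1)}(\lL)$ are inherited verbatim from those of $n\mapsto\En_1^{(2)}(\lH)$ established in Theorem~\ref{th:GUE}, because multiplication by the positive constant $2$ preserves both monotonicity and convexity.
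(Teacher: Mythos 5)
Your opening reduction is correct and is a genuinely nice observation that the paper does not use: writing $x=s^2$, $y=t^2$, using $\log\lvert s^2-t^2\rvert=\log\lvert s-t\rvert+\log\lvert s+t\rvert$ and symmetrizing does give $\En(\lL)=2\En(\tilde\nu)$ for the symmetrized square-root pushforward $\tilde\nu$, and your sanity check that $\tilde\nu\neq\lH$ (already at $n=1$) correctly identifies that the remaining identity $\En(\tilde\nu)=\En(\lH)$ is the real content. The problem is that this is exactly where your proof stops. The claim that one can ``mimic, step by step, the determinantal computation underlying Theorem~\ref{th:GUE}'' does not survive scrutiny: the density of $\tilde\nu$ is proportional to $\lvert s\rvert\ee^{-ns^2}\sum_k L_k(ns^2)^2$, and the polynomials $L_k(s^2)$ are \emph{not} Hermite polynomials for the weight $\lvert s\rvert\ee^{-s^2}$ (they would be generalized Hermite polynomials for a different parameter). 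The GUE proof hinges on two Hermite-specific ingredients --- Feldheim's linearization of $h_k^2$ and the exact Gaussian Fourier-transform evaluation of the convolution $\int\ee^{-y^2}h_{2\ell}(y)\ee^{-\lvert x-y\rvert^2/t}\,\dd y$ --- neither of which transfers to this weight, so ``collecting the contributions'' is not a step you can actually carry out as described. In effect you have asserted the conclusion ($\En(\lL)=\tfrac12+\log n+\gamma+\tfrac1{2n}-H_n$) without proving it.

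For comparison, the paper does not pass through $\tilde\nu$ at all: it applies the exponential-weight formulation of $\En$ with exponent $p=1$ (so the kernel is $\ee^{-t\lvert x-y\rvert}$, not a Gaussian), linearizes $L_k^2$ via Howell's identity, evaluates the resulting one-sided integrals $\int_0^\infty\ee^{-(t+\frac12)x}L_{2k}(x)\int_0^x\ee^{-(\frac12-t)y}L_{2\ell}(y)\,\dd y\,\dd x$ in closed form (Lemma~\ref{lem: step2-lue}), and then needs a delicate combinatorial identity (Lemma~\ref{lem:harmonic_identity}, proved via a Cauchy--Stieltjes transform and a differentiated Vandermonde convolution) to produce the harmonic numbers. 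Some machinery of comparable weight is unavoidable in your route too; your symmetrization could conceivably shorten it, but as written the proposal replaces the hard part with a hope. The final paragraph (monotonicity and convexity inherited from Theorem~\ref{th:GUE} after multiplying by $2$) is fine once the closed form is established.
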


The above theorem also implies that $\En(\lL)=2\En(\lH)$. It is rather fascinating that this holds in all dimensions. While the proof carries heavy calculations, it would be very interesting if there is an easy argument proving that the logarithmic energy of LUE is twice that of GUE. 

Figure~\ref{fig:gue-gin-lue} illustrates Theorems~\ref{th:GUE},~\ref{th:Ginibre}, and~\ref{th:laguerre} respectively. 

\begin{figure}[htbp]
  \includegraphics[width=\textwidth]{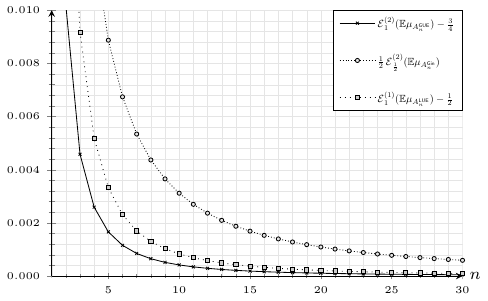}
  \caption{Logarithmic energy of GUE, Ginibre, and LUE.\label{fig:gue-gin-lue}}
\end{figure}

Following~\cite{zbMATH05730452,zbMATH05645278}, a $\beta$-ensemble is the probability measure on~$S^n$ of the form
\begin{equation}\label{eq:betaens}
 \frac{\mathrm{e}^{-n\beta\sum_{i=1}^nV(x_i)}}{Z_{V,\beta,n}}
 \prod_{i<j}|x_i-x_j|^\beta\,
 \dd x_1\cdots\dd x_n
\end{equation}
where~$S$ is a linear subset of $\mathbb{C}$, $\mathrm{d}x$ is the Lebesgue measure on~$S$, $\beta>0$ is a parameter, $V:S\to\mathbb{R}$ is a function, and $Z_{V,\beta,n}$ is the normalizing constant. Viewed as a Boltzmann--Gibbs measure, it is a Coulomb or log-gas. Under a mild assumption on~$V$, a suitable version of the Laplace method implies that as~$n\to\infty$, the mean empirical measure $\mathbb{E}[\frac{1}{n}\sum_{i=1}^n\delta_{x_i}]$ tends to the equilibrium measure 
\begin{equation*}
\mu_V:=\argmin_\mu\left(\int V\mathrm{d}\mu+\mathcal{E}(\mu)\right).
\end{equation*}
When~$\beta=2$, the density in~\eqref{eq:betaens} has a determinantal integrable structure associated with a kernel, which produces in particular an explicit formula for the density of the mean empirical distribution.  
\begin{center}
\medskip
\begin{tabular}{c|c|c|c}
 Model & $S$ & $\beta$ & $V(t)$\\ \hline
 CUE & $\{z\in\mathbb{C}:|z|=1\}$ & $2$ & $\equiv1$\\
 GUE & $\mathbb{R}$ & $2$ & $t^2$\\
 LUE (square) & $\mathbb{R}_+$\!\!\!\! & $2$ & \!\!\!\!\:$t$\\
 Ginibre (complex) & $\mathbb{C}$ & $2$ & ${|t|}^2$
\end{tabular}
\medskip
\end{center}
This is why we focus on GUE, LUE, and the complex Ginibre ensemble. Note that the case of the Circular Unitary Ensemble (CUE) is trivial from our perspective since its mean empirical spectral distribution is already equal to the uniform distribution and does not depend on the dimension.

Our computations for GUE and LUE remind \cite[Sec.~2 and 6]{zbMATH02060143},
but we were not aware of that.

We emphasize that what we compute is not what is considered for instance in
\cite{MR3046995} and in \cite{byun2023progress}.

Yet another determinantal random matrix model is the Elliptic Ginibre
Ensemble, which interpolates between Ginibre and GUE, see for instance
\cite{AIHPA_1998__68_4_449_0}, \cite[eq.~(1.2)--(1.5)]{MR2594353}, and
\cite{byun2023progress,francois-garcia-zelada}. It is natural to explore the
monotonicity phenomenon in this case. We leave that to the interested readers.

The universality of this monotonicity phenomenon, beyond these exactly
solvable random matrix models, remains for now mysterious. Nevertheless, we
believe that disregarding some technical considerations, Questions~\ref{qu:
  semi-circle},~\ref{qu: circle} and~\ref{qu: MP} have positive answers. We
provide suggestive numerical experiments for general Wigner and Girko random
matrices with non Gaussian i.i.d.\ entries, as well as some Dumitriu--Edelman
beta ensembles of random matrices.

\bigskip

The paper is organized as follows: in Section~\ref{sec:GUE}, we prove Theorem~\ref{th:GUE}, while Theorems~\ref{th:Ginibre} and~\ref{th:laguerre} are proven in Sections~\ref{sec:Ginibre} and~\ref{sec:LUE}. Finally, simulations are included in Section~\ref{se:numexp}. 

\medskip

{\small\textbf{Acknowledgments.} The authors are grateful to Peter Forrester
  and Michel Ledoux for their feedback. Part of this work was conducted during
  a visit of BD and PY to Département de Mathématiques et Applications of
  École Normale Supérieure -- PSL. The numerical experiments were carried out
  on the High Performance Computing resources at New York University Abu
  Dhabi.}

\section{Preliminaries}

\subsection{General notations}
As one may guess from the theorem statements, our computations will involve the classical harmonic numbers
\begin{equation}
  H_n\ceq1+\frac12+\cdots+\frac1n.
\label{eq:harmonic_numbers}
\end{equation}
These numbers appear in the log-derivative of the (generalized) binomial coefficients
\begin{equation}
  \binom zn\ceq\frac{z(z-1)\cdots(z-n+1)}{n!}=\frac{\Gamma(z+1)}{\Gamma(n+1)\Gamma(z+1-n)},
  \label{eq:binomial}
\end{equation}
where~$\Gamma$ denotes the gamma function. By the lowercase~$\gamma$ we mean Euler--Mascheroni's constant
\begin{align}
  \gamma&\ceq\lim_{n\to\infty}\Bigl(H_n-\log n\Bigr)=-\Gamma'(1),
  \label{eq:mascheroni}
\intertext{of which Dirichlet obtained the helpful integral representation~\cite[p.~248, Example~2]{Whittaker96}}
\gamma&=\int_0^\infty\frac{\dd t}t\left(\frac1{1+t}-\ee^{-t}\right).
  \label{eq:reprgamma}
\intertext{%
Combining this with the integral representation of the logarithm}
  \log a&=\int_0^\infty\frac{\dd t}t\Bigl(\ee^{-t}-\ee^{-ta}\Bigr),\qquad\Re a>0
  \label{eq:reprlog}
\end{align}
(see, e.g.,~\cite[\S~6.222, Example~6]{Whittaker96})
we derive the following alternative expression for the log-energy.
\begin{lemma}[Exponential weight formulation of the logarithmic energy]\label{lem:logenergy}
  Let $\nu\in\Prob_2(K)$, where $K\in\{\RR,\CC\}$,
  and recall $\En(\nu)\ceq
    -{\iint\log{\lvert x-y\rvert}\,\nu(\dd x)\nu(\dd y)}$.
  Then, for every~$p>0$,
  \[
    \En(\nu)
    =\frac\gamma p-\frac1p\int_0^\infty\frac{\dd t}t\left(\frac 1{1+t}-\iint\ee^{-t{\lvert x-y\rvert}^p}\nu(\dd x)\nu(\dd y)\right).
  \]
\end{lemma}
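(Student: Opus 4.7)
The plan is to combine the two integral representations of $\log$ and $\gamma$ into a pointwise identity for $-\log|x-y|$, integrate it against $\nu\otimes\nu$, and swap the order of integration. Substituting $a=|x-y|^p$ in \eqref{eq:reprlog}, valid for $x\neq y$ and any $p>0$, yields
\[
  -\log|x-y|=\frac1p\int_0^\infty\frac{\dd t}t\bigl(\ee^{-t|x-y|^p}-\ee^{-t}\bigr).
\]
Combining this with $-\gamma=\int_0^\infty\frac{\dd t}t\bigl(\ee^{-t}-\frac1{1+t}\bigr)$ from \eqref{eq:reprgamma} rewrites the identity as
\[
  -\log|x-y|=\frac\gamma p+\frac1p\int_0^\infty\frac{\dd t}t\left(\ee^{-t|x-y|^p}-\frac1{1+t}\right).
\]

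The main step is then to integrate over $\nu\otimes\nu$ and interchange the $\iint$ with the $\int_0^\infty$. It is convenient to do this with the more tractable integrand $\ee^{-t|x-y|^p}-\ee^{-t}$: splitting $K\times K$ into the regions $\{|x-y|\leq 1\}$ (on which this integrand is nonnegative) and $\{|x-y|>1\}$ (on which it is nonpositive), Tonelli's theorem applies on each piece, giving $\iint_{|x-y|\leq 1}(-p\log|x-y|)\,\nu\otimes\nu$ and $\iint_{|x-y|>1}(-p\log|x-y|)\,\nu\otimes\nu$, respectively. The second of these is finite since $\log|x-y|\mathds1_{|x-y|>1}\leq|x|+|y|$ is $\nu\otimes\nu$-integrable when $\nu\in\Prob_2$, while the first lies in $[0,\infty]$. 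Summing the two contributions yields, in the extended sense $(-\infty,\infty]$,
\[
  p\,\En(\nu)=\int_0^\infty\frac{\dd t}t\left(\iint\ee^{-t|x-y|^p}\,\nu(\dd x)\nu(\dd y)-\ee^{-t}\right).
\]
Adding the absolutely convergent identity $-\gamma=\int_0^\infty\frac{\dd t}t(\ee^{-t}-\frac1{1+t})$ and rearranging produces the lemma.

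The main obstacle is precisely the Fubini step: a direct application with the integrand $\frac1t(\ee^{-t|x-y|^p}-\frac1{1+t})$ would fail, since this function is not absolutely integrable against $\nu\otimes\nu\otimes\dd t$ -- the cancellation between $\ee^{-t|x-y|^p}$ and $\frac1{1+t}$ is essential as $t\to 0$, and the only control available near $t=0$ is of order $|1-|x-y|^p|$, which need not be $\nu\otimes\nu$-integrable when $p>2$. The trick is therefore to first subtract $\ee^{-t}$ rather than $\frac1{1+t}$, which restores a definite sign of the integrand on each of the regions $\{|x-y|<1\}$ and $\{|x-y|>1\}$, making Tonelli applicable, and to handle the missing $\gamma$-piece separately via its own absolutely convergent representation \eqref{eq:reprgamma}.
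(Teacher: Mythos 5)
Your proof is correct and follows essentially the same route as the paper's: both combine the representations \eqref{eq:reprlog} and \eqref{eq:reprgamma}, split $K\times K$ into $\{\lvert x-y\rvert\le1\}$ and $\{\lvert x-y\rvert>1\}$ so that the integrand $\frac1t(\ee^{-t\lvert x-y\rvert^p}-\ee^{-t})$ has a definite sign on each piece and Tonelli applies, and only then inject the absolutely convergent $\gamma$-integral. Your added remark explaining why a direct Fubini against $\frac1t(\ee^{-t\lvert x-y\rvert^p}-\frac1{1+t})$ would fail is a nice clarification, but the argument itself is the paper's.
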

\begin{proof}
  First, we have
  \begin{align*}
    \En(\nu)
      &=-\frac1p\iint\log{{\lvert x-y\rvert}^p}\,\nu(\dd x)\nu(\dd y)\\[.4em]
      &=-\frac1p\iint\II_{\{\lvert x-y\rvert>1\}}\log{{\lvert x-y\rvert}^p}\,\nu(\dd x)\nu(\dd y)
      +\frac1p\iint\II_{\{\lvert x-y\rvert\le1\}}(-{\log{{\lvert x-y\rvert}^p}})\,\nu(\dd x)\nu(\dd y),
    \intertext{where both integrands are nonnegative and only the second may give rise to an infinite integral
    (because $\log{\lvert x-y\rvert}\le\log\sqrt{(1+|x|^2)(1+|y|^2)}\le(|x|^2+|y|^2)/2$ and $\nu\in\Prob_2(K)$).
    Plugging in~\eqref{eq:reprlog} and exchanging the order of integrations with Fubini-Tonelli's theorem then leads to}
    \En(\nu)&=-\frac1p\int_0^\infty\frac{\dd t}t
    \iint\II_{\{\lvert x-y\rvert>1\}}
      \left(\ee^{-t}-\ee^{-t{\lvert x-y\rvert}^p}\right)\nu(\dd x)\nu(\dd y)\\[.4em]
      &\hphantom{={}}-\frac1p\int_0^\infty\frac{\dd t}t
      \iint\II_{\{\lvert x-y\rvert\le1\}}
        \left(\ee^{-t}-\ee^{-t{\lvert x-y\rvert}^p}\right)\nu(\dd x)\nu(\dd y)\\[.4em]
    &=-\frac1p\int_0^\infty\frac{\dd t}t\left(\ee^{-t}-\iint\ee^{-t{\lvert x-y\rvert}^p}\nu(\dd x)\nu(\dd y)\right)
  \end{align*}
  (keeping in mind that the integral may evaluate to~$-\infty$).
We inject~\eqref{eq:reprgamma} to conclude.
\end{proof}

\subsection{Level densities of determinantal processes}

Let us recall the explicit expressions for~$\lH$, $\lG$, and~$\lL$, derived from~\eqref{eq:betaens}, see~%
$\mH$~\cite[\S~6.2]{Mehta04},
$\mG$~\cite[\S~15.1]{Mehta04}, and~%
$\mL$~\cite[\S~5.7, with the weight $w\ceq\II_{\RR_+}\exp^{-1}$]{Mehta04}.
Specifically (taking the normalizations into account):
\begin{enumerate}[label=(\alph*),wide]
  \item The mean empirical spectral distribution of the normalized Gaussian Unitary Ensemble is
\begin{flalign}
&&&&\lH(\dd x)=\sqrt\frac n2\,\mH\left(x\sqrt\frac n2\right)\,\dd x,
&&\text{for}\quad
\mH(x)\ceq\frac{\ee^{-x^2}}{n\sqrt\pi}
	  \sum_{k=0}^{n-1}\frac{{h_k(x)}^2}{2^k\,k!},&&x\in\RR,&
\label{eq:levelGUE}
\end{flalign}
where
$h_k(x)\ceq{(-1)}^k\,{\ee^{x^2}}\frac{\dd^k}{\dd x^k}\ee^{-x^2}$, $k\in\ZZ_+$,
denote the (physicist's) Hermite polynomials, which are
the orthogonal polynomials with respect to the weight $\ee^{-x^2}\dd x$ on~$\RR$:
\begin{equation}
  \int_\RR h_i(x)h_j(x)\ee^{-x^2}\dd x
    =2^i\,i!\sqrt\pi\,\II_{\{i=j\}},\quad i,j\in\ZZ_+;\label{eq:orthhermite}
\end{equation}
\item The mean empirical spectral distribution of the normalized Ginibre ensemble is
\begin{flalign}
  &&&&\lG(\dd z)
  =\sqrt n\,\mG\left(z\sqrt n\right)\,\dd z,
  &&\text{for}\quad
  \mG(z)\ceq\frac{\ee^{-\lvert z\rvert^2}}{n\pi}
      \sum_{k=0}^{n-1}\frac{{\lvert z\rvert}^{2k}}{k!},&&z\in\CC;&
  \label{eq:levelGin}
\end{flalign}
\item The mean empirical spectral distribution of the normalized Laguerre Unitary Ensemble is
\begin{flalign}
  &&&&\lL(\dd s)=n\,\mL(ns)\,\dd s,
  &&\text{for}\quad
  \mL(s)\ceq\frac{\ee^{-s}}n\sum_{k=0}^{n-1}{L_k(s)}^2,&&s\in\RR_+,&
\label{eq:levelLUE}
\end{flalign}
where $L_k(s)\ceq\frac{\ee^s}{k!}\frac{\dd^k}{\dd s^k}(\ee^{-s}s^k)$, $k\in\ZZ_+$,
denote the classical Laguerre polynomials, which are orthogonal
with respect to~$\ee^{-s}\dd s$ on~$\RR_+$:
\begin{equation}
    \int_0^\infty L_i(s)L_j(s)\ee^{-s}\dd s=\II_{\{i=j\}},\qquad i,j\in\ZZ_+.%
    \label{eq:laguerre_orthogonality}
\end{equation}
\end{enumerate}

\section{Logarithmic energy of GUE}\label{sec:GUE}
The goal of this section is to prove Theorem~\ref{th:GUE}.
The first step is to reformulate the level density~$\mH$
in~\eqref{eq:levelGUE} in terms of a linear combination
of the Hermite polynomials~$h_{2k}$:
\begin{lemma}\label{lem:GUE density}
For all~$n\ge1$ and~$x\in\RR$,
\[
\mH(x)=\frac{\ee^{-x^2}}{n\sqrt\pi}\sum_{k=0}^{n-1}\frac{\binom n{k+1}}{2^k\,k!}\,h_{2k}(x).
\]
\end{lemma}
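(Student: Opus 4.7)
The plan is to start from the definition~\eqref{eq:levelGUE} of~$\mH$ and rewrite the Christoffel--Darboux-type diagonal sum $\sum_{k=0}^{n-1}h_k(x)^2/(2^k k!)$ as a sum of even Hermite polynomials $h_{2j}(x)$, by linearizing each square $h_k(x)^2$ and then swapping the two summations.

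The key algebraic ingredient is the classical linearization formula for products of Hermite polynomials, which in the case of equal indices reads
\[
  h_k(x)^2=\sum_{j=0}^{k}\binom{k}{j}^{2}\,2^{k-j}(k-j)!\;h_{2j}(x),
\]
obtained by specializing the standard product formula $h_m h_n=\sum_{\ell}\binom{m}{\ell}\binom{n}{\ell}2^{\ell}\ell!\,h_{m+n-2\ell}$ (itself a direct consequence of the generating function $\sum_k h_k(x) t^k/k!=\ee^{2xt-t^2}$) and reindexing by $j\ceq k-\ell$. Plugging this into~\eqref{eq:levelGUE} and simplifying $\binom{k}{j}^2(k-j)!/(2^j j!\cdot 2^k k!)=\binom{k}{j}/(j!\,2^j j!)$ yields
\[
  \mH(x)
  =\frac{\ee^{-x^2}}{n\sqrt\pi}\sum_{k=0}^{n-1}\sum_{j=0}^{k}\frac{\binom{k}{j}}{2^{j}\,j!}\,\frac{h_{2j}(x)}{j!}\cdot(\text{factor I will check}),
\]
so the second step is simply to verify that the combinatorial factor left behind after cancellation is exactly $\binom{k}{j}$, which is a routine bookkeeping computation with factorials.

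Once the indices $k$ and $j$ are swapped, the inner sum becomes $\sum_{k=j}^{n-1}\binom{k}{j}$, and the third step is to apply the hockey-stick identity
\[
  \sum_{k=j}^{n-1}\binom{k}{j}=\binom{n}{j+1},
\]
which immediately delivers the claimed formula. Relabeling $j\mapsto k$ finishes the proof.

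The argument is essentially symbolic; the only mildly delicate point will be to keep the double-sum rearrangement and the cancellation of $2^k k!$ factors clean, so that the bracketed coefficient really collapses to $\binom{k}{j}$ before the hockey-stick identity is invoked. No analytic input is required beyond the generating-function identity used to linearize $h_k(x)^2$.
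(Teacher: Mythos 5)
Your proposal is correct and follows essentially the same route as the paper: your linearization $h_k(x)^2=\sum_{j=0}^k\binom kj^2\,2^{k-j}(k-j)!\,h_{2j}(x)$ is exactly Feldheim's identity $h_k^2=2^kk!\sum_{r}\binom kr h_{2r}/(2^r r!)$ in disguise, and the subsequent swap of summations plus the hockey-stick identity is identical to the paper's argument. The intermediate coefficient you display is garbled (after cancelling $2^k k!$ the coefficient of $h_{2j}$ is $\binom kj/(2^j j!)$, with no extra $1/j!$ factor), but this is precisely the routine bookkeeping you flagged, and it does work out to give the claimed formula.
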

\begin{proof}
We plug in Feldheim's identity on Hermite polynomials~\cite[Eq.\ (1.5)]{Feldheim38},
\[{h_k(x)}^2=2^k\,k!\sum_{r=0}^k\frac{\binom kr}{2^r\,r!}\,h_{2r}(x),\]
into the expression~\eqref{eq:levelGUE} to get
\[\mH(x)=\frac{\ee^{-x^2}}{n\sqrt\pi}\sum_{k=0}^{n-1}\sum_{r=0}^k\frac{\binom kr}{2^r\,r!}\,h_{2r}(x).\]
The lemma is now immediate if we recall the hockey-stick identity
\[\sum_{k=0}^{n-1}\binom kr=\binom{n}{r+1}\]
and switch the summations.
\end{proof}
We now express the logarithmic energy
$\En(\lH)$
in terms of the level density~$\mH$:
\begin{lemma}\label{lem:logenergy-gue}
  We have
  \[
    \En(\lH)
    =\frac12\log{\frac n2}+\frac\gamma2-\frac12
    \int_0^\infty\frac{\dd t}t\left(\frac t{1+t}
    -\iint\ee^{-\frac{{\lvert x-y\rvert}^2}t}
    \mH(x)\mH(y)\,\dd x\dd y\right),
  \]
  where~$\gamma$ is Euler--Mascheroni's constant~\eqref{eq:mascheroni}.
\end{lemma}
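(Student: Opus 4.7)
My plan is to combine the exponential-weight formula of Lemma~\ref{lem:logenergy} with a rescaling that trades $\lH$ for the un-rescaled level density~$\mH$.

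First I would exploit the fact that $\lH$ is the pushforward of the probability measure with density~$\mH$ under the dilation $u\mapsto u\sqrt{2/n}$. Starting from the very definition $\En(\mu)=-\iint\log|x-y|\,\mu(\dd x)\mu(\dd y)$, a direct change of variables in the double integral yields the scaling identity $\En(\lH)=\frac12\log(n/2)+\En(\mH)$. One checks directly from~\eqref{eq:orthhermite} that $m_2(\mH)=n/2<\infty$, so the density~$\mH$ defines an element of $\Prob_2(\RR)$ and Lemma~\ref{lem:logenergy} is applicable to it.

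Next, I would apply Lemma~\ref{lem:logenergy} with $p=2$ to~$\mH$, expressing $\En(\mH)$ as the constant $\gamma/2$ plus/minus an improper integral whose integrand is $\frac1t\bigl(\frac1{1+t}-\iint\ee^{-t|x-y|^2}\mH(x)\mH(y)\,\dd x\dd y\bigr)$. The substitution $t\mapsto1/t$ then leaves the value of this integral unchanged while converting its integrand into $\frac1t\bigl(\frac t{1+t}-\iint\ee^{-|x-y|^2/t}\mH(x)\mH(y)\,\dd x\dd y\bigr)$, which is exactly the form appearing in the statement. Combining this with the scaling identity produces the desired equality after collecting constants.

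The main obstacle is minor --- essentially careful bookkeeping of the signs and of the logarithmic factor $\frac12\log(n/2)$ produced by the dilation, together with the verification that $t\mapsto1/t$ preserves the value of the improper integral (which only requires noting that $\frac{\dd t}{t}$ is invariant under $t\mapsto1/t$ and that the two terms of the integrand transform into one another's counterparts). The reason for reformulating $\En(\lH)$ in this particular shape is that the Gaussian kernel $\ee^{-|x-y|^2/t}$ will combine very conveniently with the explicit Hermite expansion of $\mH$ from Lemma~\ref{lem:GUE density} when the double integral is computed in the subsequent steps of Section~\ref{sec:GUE}.
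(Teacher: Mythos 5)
Your proposal is correct and follows exactly the paper's own proof: the dilation identity $\En(\lH)=\frac12\log\frac n2+\En(\nu)$ with $\nu(\dd x)=\mH(x)\,\dd x$ coming from~\eqref{eq:levelGUE}, then Lemma~\ref{lem:logenergy} with $p=2$, then the substitution $t\gets1/t$ under which $\frac{\dd t}t$ is invariant. The one point you should not leave as a ``plus/minus'' hedge is the sign of the constant: Lemma~\ref{lem:logenergy} with $p=2$ gives $\En(\nu)=\frac\gamma2-\frac12\int_0^\infty\frac{\dd t}t(\cdots)$, so your route produces $+\frac\gamma2$, whereas the statement prints $-\frac\gamma2$. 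The printed sign is a typo in the paper rather than an error in your argument: with $-\frac\gamma2$ the subsequent chain of lemmas would give $\En(\dE\mu_{A^{\GUE}_1})=-\frac\gamma2$ at $n=1$, contradicting both the direct computation $\En(\cN(0,1))=\frac\gamma2$ and the final formula of Theorem~\ref{th:GUE}, which carries $+\gamma$; your bookkeeping should make the $+\frac\gamma2$ explicit.
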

\begin{proof}
  By~\eqref{eq:levelGUE} and the change of variable $x\gets x\sqrt{2/n}$, we have
  \[
    \En(\lH)
      =\frac12\log{\frac n2}+\En(\nu),
  \]
  where $\nu(\dd x)\ceq\mH(x)\,\dd x$. It remains to apply Lemma~\ref{lem:logenergy} with~$p=2$
  and perform the change of variable $t\gets 1/t$.
\end{proof}
Next, we calculate the double integral appearing in Lemma~\ref{lem:logenergy-gue}.
\begin{lemma}\label{lem:integral-mh}
For all~$n\ge1$ and~$t>0$,
\[
  \iint\ee^{-\frac{{\lvert x-y\rvert}^2}t}\mH(x)\mH(y)\,\dd x\dd y
  =\frac{\sqrt{2t}}{n^2}\sum_{0\le k,\ell<n}\!\!\!\!{(-1)}^{k+\ell}
  \frac{\binom n{k+1}}{k!}\frac{\binom n{\ell+1}}{\ell!}
  \frac{(2k+2\ell)!}{(k+\ell)!}{(2t+4)}^{-k-\ell-\frac12}.
\]
\end{lemma}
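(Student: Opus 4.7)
The plan is to substitute the expression from Lemma~\ref{lem:GUE density} for $\mH$ into the double integral, reducing the problem to computing the family of integrals
\[
  I_{k,\ell}(t)
  \ceq\iint_{\RR^2}\ee^{-\frac{(x-y)^2}t}h_{2k}(x)h_{2\ell}(y)\ee^{-x^2-y^2}\,\dd x\,\dd y,
\]
and then reassembling. Indeed, by Lemma~\ref{lem:GUE density},
\[
  \iint\ee^{-\frac{(x-y)^2}t}\mH(x)\mH(y)\,\dd x\,\dd y
  =\frac1{n^2\pi}\sum_{k,\ell=0}^{n-1}\frac{\binom n{k+1}}{2^k\,k!}\frac{\binom n{\ell+1}}{2^\ell\,\ell!}\,I_{k,\ell}(t),
\]
so the target identity amounts to showing that
\[
  I_{k,\ell}(t)=\sqrt t\,\pi\,(-1)^{k+\ell}\frac{(2k+2\ell)!}{(k+\ell)!}\,(t+2)^{-k-\ell-\frac12}.
\]

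To evaluate $I_{k,\ell}(t)$, the key tool is to linearize the cross-term $\ee^{-(x-y)^2/t}$ through its Gaussian Fourier representation,
\[
  \ee^{-\frac{(x-y)^2}t}=\sqrt{\tfrac{t}{4\pi}}\int_\RR\ee^{-\frac{ts^2}4}\,\ee^{\ii s(x-y)}\,\dd s,
\]
which decouples the $x$ and $y$ integrals. The inner integrals are then the Fourier transforms of $h_n(x)\ee^{-x^2}$, which one obtains directly from the Hermite generating function $\sum_n h_n(x)u^n/n!=\ee^{2ux-u^2}$ by completing the square:
\[
  \int_\RR\ee^{\ii sx}h_n(x)\ee^{-x^2}\,\dd x=\sqrt\pi\,(\ii s)^n\,\ee^{-s^2/4}.
\]
Applying this with $n=2k$ and $n=2\ell$ (the latter with $s\mapsto -s$) collapses $I_{k,\ell}(t)$ to a single Gaussian moment,
\[
  I_{k,\ell}(t)=\sqrt{\tfrac{t}{4\pi}}\,\pi\,(-1)^{k+\ell}\int_\RR s^{2(k+\ell)}\,\ee^{-\frac{(t+2)s^2}4}\,\dd s,
\]
and the standard evaluation of $\int_\RR s^{2m}\ee^{-\alpha s^2}\,\dd s=\frac{(2m)!}{m!\,4^m}\sqrt\pi\,\alpha^{-m-1/2}$ with $m=k+\ell$ and $\alpha=(t+2)/4$ yields exactly the claimed formula for $I_{k,\ell}(t)$.

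Finally, substituting back into the double sum and noting that $(t+2)^{-k-\ell-\frac12}=2^{k+\ell+\frac12}(2t+4)^{-k-\ell-\frac12}$, the factor $2^{k+\ell+\frac12}$ cancels the $2^{-k-\ell}$ originating from $\mH$ (up to the $\sqrt2$ absorbed into $\sqrt{2t}=\sqrt2\sqrt t$), producing the stated identity. There is no substantive obstacle beyond bookkeeping of constants; the only point requiring a moment of care is the unconditional use of Fubini in the Fourier manipulation, which is justified because every integrand involved is dominated by a Gaussian in $(s,x,y)$.
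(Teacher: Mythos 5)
Your argument is correct, and it reaches the paper's key intermediate identity
$I_{k,\ell}(t)=\pi{(-1)}^{k+\ell}\frac{(2k+2\ell)!}{(k+\ell)!}{(2+t)}^{-k-\ell-\frac12}\sqrt t$ by a genuinely different and noticeably shorter route. The paper keeps the computation in $x$-space: it identifies the inner integral $\int\ee^{-y^2}h_{2\ell}(y)\ee^{-|x-y|^2/t}\,\dd y$ as a convolution, inverts its Fourier transform to produce a rescaled Hermite polynomial $h_{2\ell}\bigl(x/\sqrt{1+t}\bigr)$ times a Gaussian, and then must evaluate $\int\ee^{-\frac{2+t}{1+t}x^2}h_{2k}(x)h_{2\ell}\bigl(x/\sqrt{1+t}\bigr)\dd x$, which requires the Hermite scaling formula, the orthogonality relations, and finally Gauss's summation theorem for ${}_2F_1(-k,-\ell;\frac12;1)$. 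You instead write the coupling kernel $\ee^{-(x-y)^2/t}$ via its Gaussian Fourier representation, which decouples $x$ from $y$ at the outset; each factor is then the elementary Fourier transform $\sqrt\pi\,{(\ii s)}^{2k}\ee^{-s^2/4}$ of $h_{2k}(x)\ee^{-x^2}$, and the whole expression collapses to a single even Gaussian moment in $s$, producing the factor $\frac{(2k+2\ell)!}{(k+\ell)!}$ directly with no combinatorial identities. Your constant bookkeeping checks out ($\sqrt{t/(4\pi)}\cdot 2\sqrt\pi=\sqrt t$, and $2^{-k-\ell}{(t+2)}^{-k-\ell-\frac12}=\sqrt2\,{(2t+4)}^{-k-\ell-\frac12}$ restores the $\sqrt{2t}$), and the Fubini step is indeed harmless since $|h_{2k}(x)h_{2\ell}(y)|\ee^{-ts^2/4-x^2-y^2}$ is integrable on $\RR^3$. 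What your approach buys is the elimination of the scaling formula and the ${}_2F_1$ evaluation; what it gives up is only the intermediate closed form for the one-variable convolution, which the paper does not reuse elsewhere.
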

\begin{proof}
Expanding~$\mH$ with Lemma~\ref{lem:GUE density},
\begin{align}
  &\hspace{-2em}\iint\ee^{-\frac{{\lvert x-y\rvert}^2}t}\mH(x)\mH(y)\,\dd x\dd y
  \notag\\[.4em]
  &=\frac1{\pi n^2}
  \sum_{0\le k,\ell<n}\!\!
    \frac{\binom N{k+1}}{2^k\,k!}\frac{\binom N{\ell+1}}{2^\ell\,\ell!}
    \int\ee^{-x^2}h_{2k}(x)\left(\int\ee^{-y^2}h_{2\ell}(y)
    \ee^{-\frac{\lvert x-y\rvert^2}t}\dd y\right)\dd x.\label{eq:integral-mh1}
\end{align}
Fixing now~$k$ and~$\ell$, we identify the innermost integral
$C(x)\ceq\int\ee^{-y^2}h_{2\ell}(y)
\ee^{-\frac{\lvert x-y\rvert^2}t}\dd y$ as a convolution
product. Its Fourier transform equals%
\footnote{Our convention for the Fourier transform is
$\displaystyle\widehat f(\xi)\ceq\FT{y}[f(y)](\xi)\ceq
  \frac1{\sqrt{2\pi}}\int_{-\infty}^\infty f(y)\ee^{-\ii y\xi}\,\dd y,\quad
\xi\in\RR$.}
\begin{align*}
\widehat C(\xi)
&=\sqrt{2\pi}\,
\FT{y}{\left[{\ee^{-y^2}}H_{2\ell}(y)\right]}(\xi)
  \cdot\FT{y}{\left[\ee^{-\frac{y^2}t}\right]}(\xi)\\
&={{(-1)}^\ell\sqrt\pi\,\xi^{2\ell}\ee^{-\frac{\xi^2}4}}
  \cdot{\sqrt{\frac t2}\ee^{-\frac{t\xi^2}4}}\\
&={(-1)}^\ell\sqrt{\frac{\pi t}2}\,\xi^{2\ell}\ee^{-\frac{1+t}4\xi^2}
\intertext{by the convolution theorem (for the expression of $\FT{y}[{\ee^{-y^2}}h_{2\ell}(y)](\xi)$, see, e.g.,~\cite[Theorem~4.6.6]{Ismail09}),
so}
C(x)
=\widehat{\widehat C}(-x)
&={(-1)}^\ell\sqrt{\frac{\pi t}2}\,\FT{\xi}\left[\xi^{2\ell}\ee^{-\frac{1+t}4\xi^2}\right](-x)\\
&={(1+t)}^{-\ell-\frac12}\sqrt{\pi t}\ee^{-\frac{x^2}{1+t}}
  H_{2\ell}\left(\frac x{\sqrt{1+t}}\right)
\end{align*}
by the inversion theorem (and the fact that~$h_{2\ell}$
is an even polynomial). It follows that
\begin{align}
&\hspace{-2em}\int\ee^{-x^2}h_{2k}(x)\left(\int\ee^{-y^2}h_{2\ell}(y)
    \ee^{-\frac{\lvert x-y\rvert^2}t}\dd y\right)\dd x\notag\\[.4em]
&={(1+t)}^{-\ell-\frac12}\,\sqrt{\pi t}
  \int{\ee^{-\frac{2+t}{1+t}x^2}}h_{2k}(x)h_{2\ell}\left(\frac x{\sqrt{1+t}}\right)\dd x%
\notag\\[.4em]
&={(1+t)}^{-\ell}\,\sqrt{\frac{\pi t}{2+t}}
\int{\ee^{-x^2}}h_{2k}\left(x\sqrt{\frac{1+t}{2+t}}\right)
  h_{2\ell}\left(\frac x{\sqrt{2+t}}\right)\dd x\notag\\[.4em]
&=\pi\frac{{(1+t)}^{-\ell}\sqrt t}{{(2+t)}^{k+\ell+\frac12}}
\sum_{\substack{0\le i\le k\\0\le j\le\ell}}
{(-1)}^{i+j}\binom{2k}{2i}\binom{2\ell}{2j}\frac{(2i)!}{i!}\frac{(2j)!}{j!}\,{(1+t)}^{k-i+j}\,
4^{k-i}(2k-2i)!\II_{k-i=\ell-j}\notag\\[.4em]
&=\pi{(-1)}^{k+\ell}\,(2k)!(2\ell)!\,{(2+t)}^{-k-\ell-\frac12}\sqrt t\sum_{n=0}^{k\wedge\ell}
\frac{4^n}{(2n)!(k-n)!(\ell-n)!}\notag\\[.4em]
&=\pi{(-1)}^{k+\ell}\,\frac{(2k+2\ell)!}{(k+\ell)!}\,{(2+t)}^{-k-\ell-\frac12}\sqrt t,\label{eq:integral-mh2}
\end{align}
where the last equality is due to Gauss's theorem~\cite[Theorem~1.3 for $_2F_1(-k,-\ell;\frac12;1)$]{Bailey64}, and the third equality
has been obtained by applying, together with the orthogonality
relations~\eqref{eq:orthhermite}, the scaling formula~\cite[Eq.\ (4.6.33)]{Ismail09}:
\[
h_{2p}(xy)=\sum_{m=0}^p{(-1)}^m\,y^{2p-2m}{(1-y^2)}^m\binom{2p}{2m}\frac{(2m)!}{m!}\,h_{2p-2m}(x),
\quad p\in\ZZ_+.%
\]
The proof is completed by plugging~\eqref{eq:integral-mh2} back into~\eqref{eq:integral-mh1}.
\end{proof}

Before we are ready to prove Theorem~\ref{th:GUE}, we need one last calculation.
\begin{lemma}\label{lem:calcul-GUE}
For every integer~$n\geq 1$, we have 
\[
\frac1{2n^2}\sum_{\substack{0\le k,\ell<n\\k+\ell\neq0}}\frac{{(-1)}^{k+\ell}}{k+\ell}\binom n{k+1}\binom n{\ell+1}{\binom{k+\ell}k}
= \frac14+\frac12 \left(\frac{1}{2n}- H_n\right).
\]
\end{lemma}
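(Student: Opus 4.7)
The plan is to convert the finite double sum into a one-dimensional integral against $\ee^{-u}/u$, reduce the integrand to a sum of pairwise products of Laguerre polynomials, and then evaluate these integrals via a bivariate generating function together with Frullani's identity.

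Let $T_n$ denote $2n^2$ times the left-hand side. The identity $\binom{k+\ell}{k}/(k+\ell) = (k+\ell-1)!/(k!\,\ell!)$ combined with $(k+\ell-1)! = \int_0^\infty \ee^{-u}u^{k+\ell-1}\,\dd u$ (valid for $k+\ell \geq 1$) and Fubini's theorem gives
\[
  T_n
  = \int_0^\infty\frac{\ee^{-u}}{u}\bigl[B_n(u)^2 - n^2\bigr]\,\dd u,
  \qquad
  B_n(u)\ceq\sum_{k=0}^{n-1}{(-1)}^k\binom{n}{k+1}\frac{u^k}{k!}.
\]
Differentiating $L_n(u) = \sum_{j=0}^n\binom{n}{j}(-u)^j/j!$ shows $B_n = -L_n'$; a short induction on $n$ using Pascal's identity then yields $B_n = \sum_{k=0}^{n-1}L_k$. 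Expanding the square reduces the task to
\[
  T_n = \sum_{i,j=0}^{n-1}I_{ij},
  \qquad
  I_{ij}\ceq\int_0^\infty\frac{\ee^{-u}}{u}\bigl[L_i(u)L_j(u) - 1\bigr]\,\dd u,
\]
where the subtraction $L_i(0)L_j(0) = 1$ secures integrability at $u=0$.

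Next, compute the integrals $I_{ij}$ through their bivariate generating function. The classical generating function $\sum_{k\ge 0}L_k(u)s^k = (1-s)^{-1}\ee^{-us/(1-s)}$ gives $\sum_{i,j\ge 0}L_i(u)L_j(u)s^it^j = (1-s)^{-1}(1-t)^{-1}\ee^{-u\phi(s,t)}$ with $\phi(s,t)\ceq\frac{s}{1-s}+\frac{t}{1-t}$, and applying Frullani's identity $\int_0^\infty u^{-1}(\ee^{-au}-\ee^{-u})\,\dd u = -\log a$ leads to
\[
\sum_{i,j\ge 0}I_{ij}\,s^it^j = \frac{-\log\bigl(1+\phi(s,t)\bigr)}{(1-s)(1-t)}.
\]
The key algebraic simplification $1+\phi(s,t) = (1-st)/[(1-s)(1-t)]$ splits the logarithm additively, yielding $[\log(1-s)+\log(1-t)-\log(1-st)]/[(1-s)(1-t)]$; a routine coefficient extraction (using $\log(1-x) = -\sum_{k\ge 1}x^k/k$ and $(1-s)^{-1} = \sum_{i\ge 0}s^i$) then produces the compact closed form
\[
  I_{ij} = H_{\min(i,j)} - H_i - H_j.
\]

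Finally, evaluate $T_n = \sum_{i,j=0}^{n-1}(H_{\min(i,j)} - H_i - H_j)$. Counting pairs $(i,j)\in\{0,\dots,n-1\}^2$ with $\min(i,j) = k$ gives $\sum_{i,j}H_{\min(i,j)} = \sum_{k=0}^{n-1}(2n-2k-1)H_k$, which by swapping the order of summation becomes $\sum_{j=1}^{n-1}(n-j)^2/j$. Expanding the square and applying $\sum_{j=1}^{n-1}1/j = H_n-1/n$ together with the standard $\sum_{i=0}^{n-1}H_i = nH_n - n$ collapses $T_n$ to $n^2/2 + n/2 - n^2 H_n$; dividing by $2n^2$ delivers the desired identity. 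The main obstacle is the closed-form evaluation of $I_{ij}$; the subsequent harmonic sum computations are elementary.
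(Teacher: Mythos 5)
Your proof is correct, and it takes a genuinely different route from the one in the paper. The paper's argument is purely combinatorial: it splits the sum according to $k=0$ or $k\ge1$, rewrites $\binom n{\ell+1}\binom{k+\ell}k/(k+\ell)$ so that for $k\ge2$ the inner sum over $\ell$ becomes the $(n-1)$-st finite difference of a polynomial of degree $k-2<n-1$ and hence vanishes, and finishes with standard alternating harmonic-number identities. You instead use $\binom{k+\ell}k/(k+\ell)=(k+\ell-1)!/(k!\,\ell!)$ and the Gamma integral to trade the double sum for $\int_0^\infty u^{-1}\ee^{-u}\bigl[B_n(u)^2-n^2\bigr]\dd u$, recognize $B_n=-L_n'=\sum_{k<n}L_k$, and evaluate $I_{ij}=H_{\min(i,j)}-H_i-H_j$ via the Laguerre generating function, the factorization $1+\phi(s,t)=(1-st)/[(1-s)(1-t)]$, and Frullani's integral; I have checked the coefficient extraction and the closing harmonic sums, which do give $T_n=n^2/2+n/2-n^2H_n$ as required. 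The one step you should say a word about is the interchange of the infinite double sum with the $u$-integral before applying Frullani: the crude bound $\lvert L_k(u)\rvert\le\ee^{u/2}$ is not summable against $\ee^{-u}/u$, but a bound such as $\lvert L_k(u)\rvert\le\ee^{2\sqrt{ku}}$ legitimizes the interchange for $s,t$ small, and in any case the identity for $I_{ij}$ is a finite polynomial computation that can be verified independently. What your approach buys is a structural explanation of where the harmonic numbers come from (the clean kernel $H_{\min(i,j)}-H_i-H_j$), and it reveals Laguerre polynomials hiding inside the GUE computation, which resonates with the paper's observation that $\En(\lL)=2\En(\lH)$; the paper's finite-difference argument is shorter but more opaque.
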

\begin{proof}
Denoting by~$S_n$ the corresponding sum stated above,
we split it according to whether
$k=0$ or~$k\ge1$:
\[
S_n
=n\sum_{\ell=1}^{n-1}\frac{{(-1)}^\ell}\ell\binom n{\ell+1}
+n\sum_{k=1}^{n-1}\frac{{(-1)}^k}k\binom n{k+1}
  \sum_{\ell=0}^{n-1}{(-1)}^\ell\binom{n-1}\ell\frac{\binom{k+\ell-1}{k-1}}{\ell+1},
\]
where we have used that
$\binom n{\ell+1}\binom{k+\ell}k/(k+\ell)
=n\binom{n-1}\ell\binom{k+\ell-1}{k-1}/k(\ell+1)$. But for~$k\neq 1$, because
$\binom{k+\ell-1}{k-1}/(\ell+1)=P_k(\ell)$ with
$p_k(x)\ceq(x+2)\cdots(x+k-1)/(k-1)!$ a polynomial of degree $k-2<n-1$, the
rightmost sum above is~$0$ by the theory of finite differences~\cite[Trick~5.3.2]{Graham94}. Then
\begin{align*}
  S_n&=n\sum_{\ell=1}^{n-1}\frac{{(-1)}^\ell}\ell\binom n{\ell+1}-\frac{n^2(n-1)}2
       \sum_{\ell=0}^{n-1}\frac{{(-1)}^\ell}{\ell+1}\binom{n-1}\ell\\
     &=\left(\frac12-H_n\right)n^2+\frac n2,
\end{align*}
from elementary combinatorial identities on harmonic numbers~\cite[\S~6.4]{Graham94}, such as
\begin{equation*}
  \sum_{\ell=1}^n\frac{{(-1)}^{\ell-1}}\ell\binom n\ell=H_n.
\end{equation*}
To conclude, it then suffices to divide~$S_n$ by~$2n^2$.
\end{proof}

We now have everything in hand to prove Theorem~\ref{th:GUE}. 
\begin{proof}[Proof of Theorem~\ref{th:GUE}]
Combining Lemmas~\ref{lem:logenergy-gue} and~\ref{lem:integral-mh}, we can write 
\begin{align*}
\hspace{-1em}\En(\lH)&=\frac12\log{\frac n2}+\frac\gamma2\\[.4em]
&\qquad-\int_0^\infty\frac{\dd t}{2t}\left(\frac t{1+t}-\frac{\sqrt{2t}}{n^2}\sum_{k,\ell=0}^{n-1}{(-1)}^{k+\ell}
  \frac{\binom n{k+1}}{k!}\frac{\binom n{\ell+1}}{\ell!}
  \frac{(2k+2\ell)!}{(k+\ell)!}{(2t+4)}^{-k-\ell-\frac12}\right)
  \dd t.
\end{align*}
Now, observe that in the displayed sum, the term corresponding to $k=\ell=0$
reduces to $\sqrt{t/(2+t)}$; this term yields a divergent integral and will
precisely compensate with the non-integrable part $t/(1+t)$. The other
summands for $k+\ell=: s\ge1$ are however integrable and will involve the Beta
integrals
\[\int_0^\infty{(2t+4)}^{-s-\frac12}\,{(2t)}^{-\frac12}\,\dd t
\underset{\scriptscriptstyle t=\frac{2u}{1-u}}=2^{-1-2s}
\int_0^1u^{-\frac12}{(1-u)}^{s-1}\,\dd u
\enspace=\enspace\frac1{2s\binom{2s}s}.\]
Specifically, we arrive at
\begin{align*}
 \En(\lH)
  &=\frac12\log{\frac{n}{2}}+\frac\gamma2
  -\int_0^\infty\frac{\dd t}{2t}
  \left(\frac t{1+t}-\sqrt{\frac t{2+t}}\right)\\[.4em]
  &\hspace{4em}+\frac1{2n^2}\sum_{\substack{0\le k,\ell<n\\k+\ell\neq0}}\frac{{(-1)}^{k+\ell}}{k+\ell}\binom n{k+1}\binom n{\ell+1}{\binom{k+\ell}k}.
\end{align*}
Finally, simplifying with
\[\int_0^\infty\frac{\dd t}{2t}
\left(\frac t{1+t}-\sqrt{\frac t{2+t}}\right)
={\left[\frac12\log{\frac{1+t}{2+t}}
-\log\left(1+\sqrt\frac t{2+t}\right)\right]}_{t=0}^\infty
=\frac12\log{\frac12}\]
and Lemma~\ref{lem:calcul-GUE},
we get
\begin{equation*}
\En(\lH)
  =\frac14
  +\frac12\left(\log n
  +\gamma
  +\frac1{2n}
  -H_n\right).
\end{equation*}
It remains to recall $m_2(\lH)=1$ to
obtain the formula promised in Theorem~\ref{th:GUE}.
Finally, since
$n\mapsto 2\bigl[\En(\mathbb{E}\mu_{A^{\GUE}_{n+1}})-\En(\lH)\bigr]=\log(n+1)-\log n-\frac{1+2n}{2n(n+1)}$ is
negative and strictly increasing, we get that $n\mapsto\En(\lH)$ is strictly
decreasing and strictly convex.
\end{proof}

\section{Logarithmic energy of complex Ginibre}\label{sec:Ginibre}
The goal of this section is to prove Theorem~\ref{th:Ginibre}.
Recalling~\eqref{eq:levelGin}, we have
\begin{flalign*}
  &&&&&&\lG(\dd z)
  =\sqrt n\,\mG\left(z\sqrt n\right)\,\dd z,
  &&\text{with}\quad
  \mG(z)=\frac{\Gamma(n,|z|)}{n\pi},&&&&z\in\CC;&
\end{flalign*}
where we introduced the \emph{upper incomplete} Gamma function
(see~\cite[Eq.\ (15.1.16)]{Mehta04})
\begin{equation}\label{eq:uigam}
  \Gamma(n,r)\ceq\int_r^\infty
  x^{n-1}\ee^{-x}\dd x
  =(n-1)!\ee^{-r}\sum_{k=0}^{n-1}\frac{r^k}{k!}.
\end{equation}
From this, we have that for any integrable function $f\colon\CC\to\RR$,
\begin{align}
  \int_\CC f(z)\,\lG(\dd z)
  &=\frac1{2\pi}
    \int_0^{2\pi}\dd\theta\int_0^\infty
    f\left(\sqrt\frac rn\ee^{\ii\theta}\right)\frac{\Gamma(n,r)}{n!}\,\dd r.\label{eq:intf}
\end{align}
Using this for instance with $f=\left|\cdot\right|^2$ gives
\begin{equation}\label{eq: m2-gin}
  \int_\CC|z|^2\,\lG(\dd z)
  =\frac1{n^2}\sum_{k=0}^{n-1}\frac1{k!}\int_0^\infty r^{k+1}\ee^{-r}\dd r
  =  \frac1{n^2}\sum_{k=0}^{n-1}\frac{(k+1)!}{k!}=\frac1{n^2}\sum_{k=1}^nk=\frac12+\frac1{2n}.
\end{equation}

We start by reducing the calculation of the logarithmic energy to that of the expectation of some random variable. 
\begin{lemma}\label{lem: start-gin}
For every integer~$n\geq 1$, denote by $B_n\ceq\min\bigl(\frac{X_n}{Y_n},\frac{Y_n}{X_n}\bigr)$
where $(X_n,Y_n)$ is a pair of i.i.d.\ Gamma$(n+1,1)$ variables, i.e.,
the density of~$X_n$ is given by $\frac{x^{n}}{n!}\ee^{-x}\II_{[0,\infty)}$. Then
\[
\En(\lG)=\frac{\log n}2
  +\frac{1+\gamma-H_n}{2}
  +\frac14\EE{\log\left(B_n\ee^{-B_n}\right)},
\]
where we recall that~$H_n$ is the $n$-th harmonic number
and~$\gamma$ is Euler--Mascheroni's constant,
see~\eqref{eq:harmonic_numbers} and~\eqref{eq:mascheroni}.
\end{lemma}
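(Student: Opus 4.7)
The plan is to exploit the radial symmetry of the Ginibre mean density to collapse $\En(\lG) = -\iint\log|z - w|\,\lG(\dd z)\lG(\dd w)$ to a one-dimensional expectation, and then to exhibit the resulting radial law as the product of a uniform and a Gamma variable so that a Laplace computation produces the factor $B_n\ee^{-B_n}$. Concretely, I would first apply the polar representation~\eqref{eq:intf} to each copy of $\lG$. In the resulting four-fold integral in $(s_1, \theta_1, s_2, \theta_2)$, one has
\[
\log\bigl|\sqrt{s_1/n}\,\ee^{\ii\theta_1} - \sqrt{s_2/n}\,\ee^{\ii\theta_2}\bigr| = -\tfrac{1}{2}\log n + \log\bigl|\sqrt{s_1}\,\ee^{\ii\theta_1} - \sqrt{s_2}\,\ee^{\ii\theta_2}\bigr|,
\]
and the classical identity $\frac{1}{2\pi}\int_0^{2\pi}\log|r - \rho\,\ee^{\ii\psi}|\,\dd\psi = \log\max(r, \rho)$ eliminates the two angles, collapsing everything to
\[
\En(\lG) = \frac{\log n}{2} - \frac{1}{2}\EE\bigl[\log\max(S_1, S_2)\bigr],
\]
where $S_1, S_2$ are i.i.d.\ with density $q(s) \ceq \Gamma(n, s)/n!$.

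I would then split the $\max$ using $2\log\max(s, t) = \log(st) + |\log(s/t)|$, producing a \emph{mean} piece $-\frac{1}{2}\EE[\log S]$ and a \emph{spread} piece $-\frac{1}{4}\EE|\log(S_1/S_2)|$. The former is handled by recognising $q$ as the equal-weight mixture, over $k \in \{0, 1, \ldots, n-1\}$, of the Gamma$(k+1, 1)$ densities $\tfrac{s^k\ee^{-s}}{k!}$; using $\psi(k+1) = H_k - \gamma$ and the standard identity $\sum_{k=0}^{n-1}H_k = nH_n - n$, this gives $\EE[\log S] = H_n - 1 - \gamma$, which is exactly the middle term $\tfrac{1 + \gamma - H_n}{2}$ of the lemma. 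For the spread piece, the key observation is that the density at $s$ of $UX_n$, with $U \sim$ Uniform$[0,1]$ and $X_n \sim$ Gamma$(n+1,1)$ independent, equals
\[
\int_0^\infty \II_{\{s \le x\}}\,\frac{x^n\ee^{-x}}{n!}\,\frac{\dd x}{x} = \frac{\Gamma(n, s)}{n!} = q(s),
\]
so $S \stackrel{\mathrm{d}}{=} U\cdot X_n$. Writing $Y_n$ for an independent copy of $X_n$ and $Z \ceq \log(X_n/Y_n)$, this yields
\[
\log(S_1/S_2) = \log(U_1/U_2) + Z \eqqcolon L + Z,
\]
and since $-\log U_i \sim$ Exp$(1)$, the variable $L$ is a standard Laplace random variable with density $\tfrac{1}{2}\ee^{-|x|}$, independent of $Z$.

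The last ingredient is the pointwise identity $\EE|L + z| = |z| + \ee^{-|z|}$, valid for every $z \in \RR$, which one obtains by splitting $\int|x + z|\,\tfrac{1}{2}\ee^{-|x|}\,\dd x$ at $x = -z$ and performing two short integrations. Conditioning on $Z$ and using $\ee^{-|\log(X_n/Y_n)|} = B_n$ together with $|\log(X_n/Y_n)| = -\log B_n$, this delivers
\[
\EE|\log(S_1/S_2)| = -\EE[\log B_n] + \EE[B_n] = -\EE\bigl[\log(B_n\ee^{-B_n})\bigr],
\]
and assembling the three pieces yields the stated formula. The main obstacle is spotting the product representation $S \stackrel{\mathrm{d}}{=} U X_n$: this is what hides a Laplace variable inside the log-energy and, through the identity $\EE|L + z| = |z| + \ee^{-|z|}$, is responsible for the specific $B_n\ee^{-B_n}$ shape of the answer.
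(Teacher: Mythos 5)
Your argument is correct, and after the common first step it follows a genuinely different route from the paper's. Both proofs begin identically: the angular identity $\frac{1}{2\pi}\int_0^{2\pi}\log|r-\ee^{\ii\theta}|\,\dd\theta=\log\max(r,1)$ reduces $\En(\lG)$ to $\frac{\log n}{2}-\frac12\EE[\log\max(S_1,S_2)]$ for i.i.d.\ $S_i$ with density $\Gamma(n,\cdot)/n!$. The paper then substitutes the integral representation $\Gamma(n,r)/n!=\int_r^\infty x^{n-1}\ee^{-x}\dd x/n!$, applies Fubini to produce the inner integral $\int_0^x\dd r\int_0^y\dd s\,\log\max(r,s)=\tfrac12\min(x,y)^2+xy(\log\max(x,y)-1)$, and algebraically forces out a factor $xy$ so that a pair of Gamma$(n+1,1)$ variables appears; the $B_n\ee^{-B_n}$ term then emerges from rearranging $\tfrac12\min(x,y)^2+xy\log\max(x,y)$ in terms of $x/y$ and $y/x$. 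You instead split $2\log\max(s,t)=\log(st)+|\log(s/t)|$, handle the mean piece via the equal-weight Gamma-mixture structure of $\Gamma(n,\cdot)/n!$ (the paper gets the same $\tfrac{1+\gamma-H_n}{2}$ from $\EE\log X_n=H_n-\gamma$ directly), and for the spread piece use the factorization $S\overset{\mathrm d}{=}U X_n$ together with the Laplace absolute-moment identity $\EE|L+z|=|z|+\ee^{-|z|}$. Your factorization $S\overset{\mathrm d}{=}UX_n$ is in fact the probabilistic content of the paper's Fubini swap (the forced factor $xy$ is exactly the normalization of the two conditional uniforms), but your execution is different and has the merit of explaining \emph{why} the combination $B_n\ee^{-B_n}$ appears: the $|z|$ in the Laplace identity contributes $-\log B_n$ and the $\ee^{-|z|}$ contributes $B_n$. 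The paper's version is more elementary (only explicit antiderivatives), while yours is shorter on computation and more structurally transparent; all the intermediate identities you invoke ($\sum_{k=0}^{n-1}H_k=nH_n-n$, the density of $UX_n$, $\EE|L+z|=|z|+\ee^{-|z|}$) check out.
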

\begin{proof}
Note that 
\begin{equation*}
  \int_0^{2\pi}\int_0^{2\pi}\log{\left\lvert\sqrt r\ee^{\ii\theta}-\sqrt s\ee^{\ii\varphi}\right\rvert}\,\dd\theta\dd\varphi
  =2\pi^2\log{\max(r,s)},\qquad r,s>0,
\end{equation*}
which is an easy consequence of the standard formula
$\int_0^{2\pi}\log{\lvert{r-\ee^{\ii\theta}\rvert}}\,\dd\theta=2\pi\log\max(r,1)$.
Using this and~\eqref{eq:intf}, we can write 
\begin{align*}
 \En(\lG)
 &=-{\iint_{\CC^2}\log{\lvert x-y\rvert}\,\lG(\dd x)\,\lG(\dd y)}\\[.4em]
 &=\frac{\log n}2
  -\frac12\iint_{{(0,\infty)}^2}\!\!\!\frac{\Gamma(n,r)\Gamma(n,s)}{{n!}^2}\log{\max(r,s)}\,\dd r\dd s.
\end{align*}
The integral formula for the upper incomplete Gamma function in~\eqref{eq:uigam}
and Fubini's theorem give
\begin{equation*}
\En(\lG)
=\frac{\log n}{2}-\frac12\iint_{{[0,\infty)}^2}\frac{{(xy)}^{n-1}
    \ee^{-x-y}}{{n!}^2}
  \left(\int_0^x\dd r\int_0^y\dd s\log{\max(r,s)}\right)\dd x\dd y.
\end{equation*}
We observe at this point that $xy\,\frac{{(xy)}^{n-1}\ee^{-x-y}}{{n!}^2}$ is the
joint density of a pair $(X_n,Y_n)$ of independent and identically distributed
Gamma$(n+1,1)$ random variables.
In order to use this fact, we force an extra factor~$xy$ as follows:
\begin{align*}
\int_0^x\dd r\int_0^y\dd s\log{\max(r,s)}
  &=\tfrac12\,{\min(x,y)}^2+xy\Bigl(\log\max(x,y)-1\Bigr)\\
  &=\tfrac12\,xy\min\Bigl(\tfrac xy,\tfrac yx\Bigr)
    +xy\Bigl(\tfrac12\log{\max\Bigl(\tfrac xy,\tfrac yx\Bigr)}
    +\tfrac{1}2\log{xy}-1\Bigr)\\
  &=xy\Bigl(-1+\frac{\log(x)+\log(y)}2\Bigr)
    -\tfrac12 xy\log\Bigl(\min(\tfrac xy,\tfrac yx)
    \ee^{-\min(\frac xy,\frac yx)}\Bigr),
\end{align*}
where we have intentionally used the quantities~$x/y$ and~$y/x$, in such a way
that if we consider the random variables $Z_n\ceq X_n/Y_n$ and
$B_n\ceq\min(Z_n,1/Z_n)$, then
\begin{equation*}
  \En( \lG)
  =\frac{\log n}{2}
  +\frac12
  -\frac12\EE{\frac{\log(X_n)+\log(Y_n)}2}
  +\frac14\EE{\log\left(B_n\ee^{-B_n}\right)}.
\end{equation*}
The log-moment of the Gamma$(n+1,1)$ distribution is known
to be
\begin{equation*}
  \EE{\log{X_n}}=\EE{\log{Y_n}}=H_n-\gamma
\end{equation*}
(it suffices to consider the derivative of $\EE{X^t}=\frac{\Gamma(n+1+t)}{n!}$ at~$t=0$,
which amounts to taking the log-derivative of $\Gamma(n+1+t)=(n+t)\cdots(1+t)\Gamma(1+t)$ at~$t=0$,
and to recall that $\gamma=-\Gamma'(1)$).
The lemma follows. 
\end{proof}

Next, we calculate the corresponding expectation appearing in the previous lemma. 
\begin{lemma}\label{lem: calcul-gin}
Let $B_n\ceq\min\big(\frac{X_n}{Y_n}, \frac{Y_n}{X_n}\big)$ where $(X_n,Y_n)$ is a pair of i.i.d.\ Gamma$(n+1,1)$ random variables.
Then 
\begin{equation*}
  \EE{\log\left(B_n\ee^{-B_n}\right)}
  =-\frac{n+1}n+2\sum_{k=n+1}^\infty\frac{4^{-k}\binom{2k}k}{k(k-1)}.
\end{equation*}
\end{lemma}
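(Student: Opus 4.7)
The plan is to compute $\EE{B_n}$ and $\EE{\log B_n}$ separately and combine. Since $Z_n\ceq X_n/Y_n$ has the Beta-prime density $\frac{(2n+1)!}{(n!)^2}\cdot\frac{z^n}{(1+z)^{2n+2}}$ on $(0,\infty)$, and $Z_n$ is equidistributed with $1/Z_n$ (by the symmetry $X_n\leftrightarrow Y_n$), we have $\EE{f(B_n)}=2\,\EE{f(Z_n)\II_{Z_n\le 1}}$ for any test function~$f$. The workhorse is then the substitution $z=(1-v)/(1+v)$, under which $\frac{z^n}{(1+z)^{2n+2}}\,\dd z$ pulls back to $\frac{(1-v^2)^n}{2\cdot 4^n}\,\dd v$ on $(0,1)$ and $\log z$ becomes $-2\,\mathrm{arctanh}(v)$, turning both pieces of $\EE{\log(B_n\ee^{-B_n})}=\EE{\log B_n}-\EE{B_n}$ into clean polynomial integrals against $(1-v^2)^n$.

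For $\EE{B_n}$, this yields $\EE{B_n}=\frac{(2n+1)!}{(n!)^2\,4^n}\int_0^1(1-v)^{n+1}(1+v)^{n-1}\,\dd v$, and a single integration by parts ($p=(1-v)^{n+1}$, $\dd q=(1+v)^{n-1}\,\dd v$) reduces the integral to $\frac{n+1}{n}\int_0^1(1-v^2)^n\,\dd v-\frac{1}{n}$. Using the standard evaluation $\int_0^1(1-v^2)^n\,\dd v=4^n(n!)^2/(2n+1)!$ gives the closed form
\[
\EE{B_n}=\frac{n+1}{n}-\frac{(2n+1)\binom{2n}{n}}{n\cdot 4^n}.
\]

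For $\EE{\log B_n}$, the substitution yields $\EE{\log B_n}=-\frac{2(2n+1)!}{4^n(n!)^2}\,I_n$ with $I_n\ceq\int_0^1\mathrm{arctanh}(v)(1-v^2)^n\,\dd v$. I would compute $I_n$ by integrating by parts with $p=(1-v^2)^n$ and the antiderivative $\int\mathrm{arctanh}(v)\,\dd v=v\,\mathrm{arctanh}(v)+\tfrac12\log(1-v^2)$, splitting the resulting $v^2(1-v^2)^{n-1}\mathrm{arctanh}(v)$ term via $v^2=1-(1-v^2)$ and evaluating $\int_0^1 v(1-v^2)^{n-1}\log(1-v^2)\,\dd v=-\tfrac{1}{2n^2}$ via the elementary substitution $w=1-v^2$. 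This produces the first-order recursion $(2n+1)\,I_n=2n\,I_{n-1}-\frac{1}{2n}$ with $I_0=\log 2$. The rescaling $J_n\ceq I_n(2n+1)!/(4^n(n!)^2)$ turns it into the additive $J_n-J_{n-1}=-\binom{2n}{n}/(2n\cdot 4^n)$, which telescopes; combined with the classical series $\sum_{k\ge 1}\binom{2k}{k}/(k\cdot 4^k)=2\log 2$ (a specialization of $\sum_k\binom{2k}{k}x^k/k=-2\log\tfrac{1+\sqrt{1-4x}}{2}$), this gives
\[\EE{\log B_n}=-\sum_{k=n+1}^\infty\frac{\binom{2k}{k}}{k\cdot 4^k}.\]

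Subtracting, $\EE{\log(B_n\ee^{-B_n})}=-\frac{n+1}{n}+\frac{(2n+1)\binom{2n}{n}}{n\cdot 4^n}-\sum_{k=n+1}^\infty\binom{2k}{k}/(k\cdot 4^k)$, and to match the stated form I would invoke the auxiliary identity
\[
\frac{(2n+1)\binom{2n}{n}}{n\cdot 4^n}=\sum_{k=n+1}^\infty\frac{(k+1)\binom{2k}{k}}{k(k-1)\cdot 4^k},
\]
proved by telescoping: writing $f(n)$ for its left-hand side and using $\binom{2n+2}{n+1}=\frac{2(2n+1)}{n+1}\binom{2n}{n}$, a direct calculation gives $f(n)-f(n+1)=\frac{(n+2)\binom{2n+2}{n+1}}{n(n+1)\cdot 4^{n+1}}$, which is precisely the $k=n+1$ term on the right, and $f(n)\to 0$ as $n\to\infty$. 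The partial fraction $\frac{k+1}{k(k-1)}=\frac{1}{k}+\frac{2}{k(k-1)}$ then merges the two remaining tail series into the desired $2\sum_{k>n}\binom{2k}{k}/(k(k-1)\cdot 4^k)$. The main technical obstacle is the derivation of the $I_n$ recursion, which hinges on the right integration-by-parts setup and the slick splitting $v^2=1-(1-v^2)$; once that step is cleared, the remaining manipulations are bookkeeping and the telescoping identity is the final payoff.
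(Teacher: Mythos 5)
Your argument is correct, and it reaches the stated identity by a genuinely different route than the paper. Both proofs start identically: the Beta-prime density of $Z_n=X_n/Y_n$ and the symmetry $Z_n\overset{d}{=}1/Z_n$ reduce everything to integrals of $\log(xe^{-x})$ against $x^n(1+x)^{-2n-2}$ on $(0,1)$, and the $\EE B_n$ piece is in both cases a one-step integration by parts exploiting the symmetry of $(1-v^2)^n$ (the paper does the same computation in the variable $u=x/(1+x)$). The divergence is in the $\EE\log B_n$ piece. The paper substitutes $u=x/(1+x)$ and then $v=u(1-u)$, expands $(1-4v)^{-1/2}$ as a Newton series, applies Fubini to get a double series, regroups it by $k+\ell$ using two combinatorial identities from Gould, and finally closes the tail with the Gauss ${}_2F_1$ summation theorem. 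You instead substitute $z=(1-v)/(1+v)$ so that $\log z=-2\operatorname{arctanh}(v)$, derive the first-order recursion $(2n+1)I_n=2nI_{n-1}-\tfrac1{2n}$ for $I_n=\int_0^1\operatorname{arctanh}(v)(1-v^2)^n\,\dd v$ by a single integration by parts and the splitting $v^2=1-(1-v^2)$, and telescope; the only series input is the elementary generating function $\sum_k\binom{2k}k x^k/k$ at $x=\tfrac14$ (justified at the boundary by the $k^{-3/2}$ decay and Abel's theorem). Your final bookkeeping also replaces the paper's closing manipulation (reindexing plus another Gauss summation) by a clean telescoping identity for $(2n+1)\binom{2n}n/(n4^n)$, which I checked: $f(n)-f(n+1)$ is indeed the $k=n+1$ term and $f(n)\to0$. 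The net effect is a more elementary and self-contained derivation — no hypergeometric summation theorems and no external identity tables — at the cost of having to guess the right integration-by-parts setup for the recursion; the paper's series-expansion route is more mechanical once one accepts the quoted identities. All the individual steps you state (the boundary term of $v\operatorname{arctanh}(v)+\tfrac12\log(1-v^2)$ vanishing against $(1-v^2)^n$ for $n\ge1$, the rescaling $J_n$ making the recursion additive, the partial fraction $\tfrac{k+1}{k(k-1)}-\tfrac1k=\tfrac2{k(k-1)}$) check out, so there is no gap.
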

\begin{proof}
Note that a ratio of two independent Gamma$(n+1,1)$ random variables follows the ``Beta prime distribution''~\cite[p.~248]{Johnson95} with parameters $(n+1,n+1)$; its probability density
function is
\begin{equation*}
  x\in[0,\infty)\mapsto\frac1{\rB(n+1,n+1)}\,x^n{(1+x)}^{-2n-2},
\end{equation*}
where
\begin{equation}\label{eq:betan+1}
  \rB(n+1,n+1)\ceq\int_0^1x^n{(1-x)}^n\,\dd x=\frac1{(n+1)\binom{2n+1}n}.
\end{equation}
Therefore, denoting $Z_n=\frac{X_n}{Y_n}$, we have 
\begin{align}
  \EE{\log\left(B_n\ee^{-B_n}\right)}
  &=\EE{\II_{\{Z_n<1\}}\log\left(Z_n\ee^{-Z_n}\right)}
  +\EE{\II_{\{Z_n^{-1}<1\}}\log\left(Z_n^{-1}\ee^{-Z_n^{-1}}\right)}\notag\\
  &=2\EE{\II_{\{Z_n<1\}}\log\left(Z_n\ee^{-Z_n}\right)}\notag\\
  &=2(n+1)\binom{2n+1}n
  \int_0^1\log\left(x\ee^{-x}\right)\,x^n{(1+x)}^{-2n-2}\,\dd x.
  \label{eq:logterm}
\end{align}
The change of variable $u=x/(1+x)$ which maps~$[0,1]$ to~$[0,\frac12]$
and $\log(x\ee^{-x})=\log x-x$ give
\begin{equation}\label{eq:logtermint}
  \int_0^1\log\left(x\ee^{-x}\right)\,x^n{(1+x)}^{-2n-2}\,\dd x
  =\int_0^{\frac12}\log\left(\frac u{1-u}\right)u^n{(1-u)}^n\,\dd u
  -\int_0^{\frac12}u^{n+1}{(1-u)}^{n-1}\,\dd u.
\end{equation}
The last integral is an incomplete Beta integral which can be computed by
integration by parts as
\begin{align}
\int_0^{\frac12}u^{n+1}{(1-u)}^{n-1}\,\dd u
  &=\frac{n+1}n\int_0^{\frac12}u^n{(1-u)}^n\,\dd u-{\left[u^{n+1}\frac{{(1-u)}^n}n\right]}_{0}^{\frac12}\notag\\
  &=\frac{n+1}{2n}\int_0^1u^n{(1-u)}^n\,\dd u-\frac1{n2^{2n+1}}\notag\\[.4em]
  &=\frac1{2n\binom{2n+1}n}-\frac1{n2^{2n+1}}, \label{eq:logtermint1}
\end{align}
where we used crucially the symmetry of $u^n{(1-u)}^n$ around~$u=\frac12$ and~\eqref{eq:betan+1}. The first integral on the right-hand side of~\eqref{eq:logtermint} equals
\begin{align*}
	\int_0^{\frac12}\log\left(\frac u{1-u}\right)u^n{(1-u)}^n\,\dd u
	&=\int_0^{\frac14}v^n{(1-4v)}^{-\frac12}\log\left(\frac{1-\sqrt{1-4v}}{1+\sqrt{1-4v}}\right)\,\dd v\notag\\
    &=\sum_{k=0}^\infty\binom{2k}k\int_0^{\frac14}v^{k+n}
      \log\left(\frac{1-\sqrt{1-4v}}{1+\sqrt{1-4v}}\right)\,\dd v\notag\\
    &=-{\sum_{k=0}^\infty\frac{\binom{2k}k}{k+n+1}\int_0^{\frac14}v^{k+n}
      {(1-4v)}^{-\frac12}\,\dd v}\notag\\
    &=-{\sum_{k,\ell=0}^\infty\frac{4^{-(k+\ell+n+1)}\binom{2k}k\binom{2\ell}\ell}{(k+n+1)(k+\ell+n+1)}},
\end{align*}
where the first equality is due to the change of variable $v=u(1-u)$, the
second and fourth equalities come from the Newton series expansion
${(1-4v)}^{-\frac12}=\sum_{k=0}^\infty\binom{2k}kv^k$ (valid for~%
$|v|<\frac14$) and Fubini's theorem, and the third equality follows from
integration by parts (the logarithmic term has derivative $1/v\sqrt{1-4v}$).
Gathering the terms of this last series by~$k+\ell$ leads to
\begin{align}
  \int_0^{\frac12}\log\left(\frac u{1-u}\right)u^n{(1-u)}^n\,\dd u
  &=-\sum_{k=0}^\infty
    \frac{4^{-(k+n+1)}}{k+n+1}\sum_{r=0}^k\frac{\binom{2r}r\binom{2k-2r}{k-r}}{r+n+1}\notag\\
  &=-\frac12\sum_{k=0}^\infty\frac{4^{-(k+n+1)}\binom{2k+2n+2}{k+n+1}}{(k+n+1)(n+1)\binom{2n+1}n}\notag\\
  &=-\frac1{2(n+1)\binom{2n+1}n}\sum_{k=n+1}^\infty\frac{\binom{2k}k}{4^k\,k},\label{eq:logtermint2}
\end{align}
where the second equality follows from
the combinatorial identities~\cite[(3.95) \& (Z.45)]{Gould72}
\begin{equation*}
  \sum_{r=0}^k\binom{2r}{r}\binom{2k-2r}{k-r}\frac{n+1}{n+1+r}=2^{2k}\frac{\binom{n+k+\frac12}k}{\binom{n+k+1}k}
  =\frac{\binom{2n+2k+2}{n+k+1}}{\binom{2n+2}{n+1}}.
\end{equation*}
Plugging $\eqref{eq:logtermint}=\eqref{eq:logtermint2}-\eqref{eq:logtermint1}$
back into~\eqref{eq:logterm}, we get
\begin{equation*}
  \mathbb{E}[\log(B_n\ee^{-B_n})]
  =\frac{(n+1)\binom{2n+1}n}{4^n\,n}
  -\frac{n+1}n
  -\sum_{k=n+1}^\infty\frac{\binom{2k}k}{4^k\,k}.
\end{equation*}
We can slightly simplify this last expression if we notice that
\begin{equation*}
  \frac{\binom{2k}k}k
  =\frac{\binom{2k+1}k-\binom{2k}{k-1}}k
  =\frac{\binom{2k+2}{k+1}}{2k}-\frac{\binom{2k}k}{k+1},
\end{equation*}
so
\begin{align*}
  \sum_{k=n+1}^\infty\frac{\binom{2k}k}{4^k\,k}
  &=\sum_{k=n+1}^\infty\frac{\binom{2k+2}{k+1}}{2^{2k+1}\,k}
    -\sum_{k=n+1}^\infty\frac{4^{-k}\binom{2k}k}{k+1}\\
  &=2\sum_{k=n+2}^\infty\frac{4^{-k}\binom{2k}k}{k-1}
    -\frac{\binom{2n+1}{n+1}}{4^n},
    \intertext{by reindexation for the first sum and using for the second sum the Gauss
    summation theorem~\cite[Theorem~1.3 for the hypergeometric series
	${}_2F_1(1,n+\frac32;n+3;1)$]{Bailey64}. Reintroducing the
	term at $k=n+1$ and subtracting twice the left-hand side on both sides, we arrive at}
    -{\sum_{k=n+1}^\infty\frac{\binom{2k}k}{4^k\,k}}
  &=2\sum_{k=n+1}^\infty\frac{4^{-k}\binom{2k}k}{k(k-1)}
    -\frac{(n+1)\binom{2n+1}n}{4^n\,n},
\end{align*}
which implies the desired identity. 
\end{proof}

We are now ready to prove Theorem~\ref{th:Ginibre}.
\begin{proof}[Proof of Theorem~\ref{th:Ginibre}]
Combining~\eqref{eq: m2-gin} together with Lemmas~\ref{lem: start-gin} and~\ref{lem: calcul-gin}, we have 
\[
 \En_{\frac{1}{2}}^{(2)}(\lG)= \underbrace{\frac34
  +\frac12\left(\log n+\gamma+\frac1{2n}-H_n\right)\vphantom{\sum_{k=n+1}^\infty}}_{a_n}
+\underbrace{\frac12\sum_{k=n+1}^\infty\frac{4^{-k}\binom{2k}k}{k(k-1)}}_{b_n},
\]
which proves the first part of the theorem. 
Finally, we have already seen at the very end of the proof of Theorem~\ref{th:GUE} that $n\mapsto a_n$ is decreasing and strictly convex. Moreover,
$n\mapsto b_n$ is obviously strictly decreasing, and it is easily checked that
\begin{equation*}
  (b_{n+1}-b_n)-(b_n-b_{n-1})
  =\frac{(2n)!}{(n+1)!^2(n^2-1)}\left[4{(n+1)}^3-(2n+2)(2n+1)(n-1)\right]
  >0.
\end{equation*}
This concludes the proof. 
\end{proof}

\section{Logarithmic energy of LUE}\label{sec:LUE}
The goal of this section is to prove Theorem~\ref{th:laguerre}.
The first step is to reformulate the level density~$\mL$
in~\eqref{eq:levelLUE} in terms of a linear combination
of the Laguerre polynomials.
Let us record here their explicit expression
(easily derived from Leibniz' differentiation formula):
\begin{equation}
    L_k(s)=\sum_{i=0}^k\frac{{(-1)}^i}{i!}\binom kis^i,\label{eq:laguerre_closedform}
\end{equation}
see, e.g.,~\cite[Theorem~4.6.1]{Ismail09}.
\begin{lemma}\label{lem:LUE density}
For every~$s\in\RR_+$,
\[
\mL(s)=2\ee^{-s}\sum_{k=0}^{n-1}{(-1)}^{n-k}\binom{n-1}k\binom{k-\frac12}n L_{2k}(2s).
\]
\end{lemma}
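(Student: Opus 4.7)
The plan is to follow the template of the GUE case (Lemma~\ref{lem:GUE density}): first establish a Feldheim-type product identity expressing $L_k(s)^2$ as a linear combination
\[
L_k(s)^2 \;=\; \sum_{r=0}^k \alpha_{k,r}\,L_{2r}(2s)
\]
of even-indexed, dilated Laguerre polynomials, then substitute into~\eqref{eq:levelLUE}, interchange the order of summation, and collapse the inner sum via a hockey-stick-type identity.

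To produce the linearization, two routes are natural. Directly, one can expand both sides in powers of~$s$ using the closed form~\eqref{eq:laguerre_closedform} for $L_k$ (and the analogous expansion $L_{2r}(2s) = \sum_{i=0}^{2r}\frac{(-2)^i}{i!}\binom{2r}{i}s^i$) and match coefficients, obtaining $\alpha_{k,r}$ as an explicit finite sum that should close up via a hypergeometric summation such as Saalsch\"utz's theorem. Alternatively, one can exploit the Hardy--Hille bilinear generating function
\[
\sum_{n=0}^\infty L_n(s)^2\,t^n \;=\; \frac{1}{1-t}\,\ee^{-2st/(1-t)}\,I_0\!\left(\frac{2s\sqrt{t}}{1-t}\right)
\]
together with the standard generating function of $\{L_n(2s)\}_{n\ge0}$, extracting the even part in~$\sqrt{t}$. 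Small-$n$ verification (for $n\le 3$) suggests that the coefficients $\alpha_{k,r}$ will involve central binomial coefficients $\binom{2k}{k}$ and satisfy the symmetry $\alpha_{k,r} = \alpha_{k,k-r}$, consistent with the appearance of the half-integer binomial $\binom{k-\frac12}{n}$ in the statement.

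Granting the linearization, substituting into $\mL(s) = \frac{\ee^{-s}}{n}\sum_{k=0}^{n-1}L_k(s)^2$ and swapping summations yields
\[
\mL(s) \;=\; \frac{\ee^{-s}}{n}\sum_{r=0}^{n-1}\left(\sum_{k=r}^{n-1}\alpha_{k,r}\right) L_{2r}(2s),
\]
so the lemma reduces to checking the identity
\[
\frac{1}{n}\sum_{k=r}^{n-1}\alpha_{k,r} \;=\; 2(-1)^{n-r}\binom{n-1}{r}\binom{r-\frac12}{n},
\]
which is the Laguerre analog of the hockey-stick identity $\sum_{k=0}^{n-1}\binom{k}{r} = \binom{n}{r+1}$ used in the GUE proof.

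The main obstacle will be pinning down a sufficiently clean closed form for $\alpha_{k,r}$ and then executing the half-integer binomial summation on its partial sums. As in Lemma~\ref{lem: calcul-gin}, this will most likely require Gauss-type summation theorems for hypergeometric series rather than purely integer combinatorics; the simplification should ultimately produce the factor $\binom{r-\frac12}{n}$ through the classical relation between $\binom{2k}{k}$ and half-integer Pochhammer symbols.
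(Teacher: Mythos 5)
Your plan is essentially the paper's proof: the linearization you are after is Howell's identity $L_k(s)^2=4^{-k}\sum_{r=0}^{k}\binom{2r}{r}\binom{2k-2r}{k-r}L_{2r}(2s)$ (so your guess that the coefficients involve central binomials and are symmetric in $r\leftrightarrow k-r$ is exactly right), and the ``Laguerre hockey-stick'' that collapses the partial sums after swapping summations is $\sum_{j=0}^{m}4^{-j}\binom{2j}{j}=\binom{m+\frac12}{m}$, from which the stated half-integer binomial $\binom{k-\frac12}{n}$ follows by standard manipulations of generalized binomial coefficients. The paper cites Howell (1937) and Gould's tables for these identities rather than rederiving them, but the decomposition, the interchange of sums, and the final collapse are step for step what you propose.
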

\begin{proof}
We plug in Howell's identity on Laguerre polynomials~\cite[(7.1)]{Howell37},
\[
{L_k(s)}^2=\frac1{4^k}\sum_{r=0}^k\binom{2r}r\binom{2k-2r}{k-r}L_{2r}(2s),
\]
into the expression~\eqref{eq:levelLUE} to get
\begin{align*}
    \mL(s)&=\frac1n\ee^{-s}
    \sum_{r=0}^{n-1}\frac{\binom{2r}r}{4^r}\left(\sum_{k=0}^{n-1-r}\frac{\binom{2k}k}{4^k}\right)
      L_{2r}(2s)\\[.4em]
    &=\frac1n\ee^{-s}
    \sum_{r=0}^{n-1}\frac{\binom{2r}r}{4^r}\binom{n-r-\frac12}{n-r-1}
        L_{2r}(2s),
\end{align*}
where the appearance of the generalized binomial coefficient (see~\eqref{eq:binomial})
results from~\cite[(1.109)]{Gould72}.
The proof is completed using the alternative expression
\[4^{-r}\binom{2r}r\binom{n-r-\frac12}{n-r-1}=2n{(-1)}^{n-r}\binom{n-1}r\binom{r-\frac12}n\]
given by~\cite[(Z.46) \& (Z.55)]{Gould72}.
\end{proof}

We next derive the following expansion of the logarithmic energy of LUE. 
\begin{lemma}\label{lem: step1-lue}
We have
\[
\En(\lL)
=\log(2n)+\gamma
    -\int_0^\infty\frac{\dd t}t
    \left(\frac1{1+t}-J(t)
    \right),
\]
where 
\begin{align*}
J(t)\ceq2
\sum_{0\le k,\ell<n}
{(-1)}^{k+\ell}&\binom{n-1}k\binom{n-1}\ell\binom{k-\frac12}n\binom{\ell-\frac12}n\\
&\times 
    \int_0^\infty\ee^{-(t+\frac12)x}L_{2k}(x)\,\dd x
    \int_0^x\ee^{-(\frac12-t)y}L_{2\ell}(y)\,\dd y.
\end{align*}
\end{lemma}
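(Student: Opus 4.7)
The plan is to reduce the computation of $\En(\lL)$ to a direct application of Lemma~\ref{lem:logenergy} with $p = 1$, after a rescaling that exposes the Laguerre polynomials in the exact form expected by $J(t)$.

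First, I use the dilation identity $\En(\mu) = \En((T_a)_*\mu) + \log a$ (where $T_a(x) \ceq ax$), which is a direct consequence of the definition of $\En$. Applied with $a = n$ to $\lL(\dd x) = n\,\mL(nx)\,\dd x$, this turns $\lL$ into the probability measure $\mL(y)\,\dd y$. Applying it again with $a = 2$ gives the probability measure
\[
\tilde\nu(\dd z) \ceq \tfrac{1}{2}\,\mL(z/2)\,\dd z
\]
on $\RR_+$, and the composition yields $\En(\lL) = \log(2n) + \En(\tilde\nu)$. The double rescaling is engineered precisely so that, via Lemma~\ref{lem:LUE density}, the argument halving cancels the doubling inside the Laguerre polynomials through $L_{2k}(2\cdot z/2) = L_{2k}(z)$, leaving the clean form
\[
\tilde\nu(\dd z) = \ee^{-z/2}\sum_{k=0}^{n-1} c_k\,L_{2k}(z)\,\dd z,
\qquad c_k \ceq {(-1)}^{n-k}\binom{n-1}{k}\binom{k-\tfrac12}{n}.
\]

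Next, I invoke Lemma~\ref{lem:logenergy} with $p = 1$ on $\tilde\nu$, obtaining
\[
\En(\tilde\nu) = \gamma - \int_0^\infty \frac{\dd t}{t}\left(\frac{1}{1+t} - F(t)\right),
\qquad F(t) \ceq \iint \ee^{-t\lvert x-y\rvert}\,\tilde\nu(\dd x)\tilde\nu(\dd y).
\]
Substituting the explicit density of $\tilde\nu$ and splitting the double integral into $\{x\ge y\}$ and $\{x<y\}$, I swap the integration variables in the latter region and relabel the summation indices $k\leftrightarrow\ell$. Since $c_k c_\ell = c_\ell c_k$, both halves coincide, producing an overall factor of $2$. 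The exponent factorizes cleanly as $-t(x-y)-\tfrac{x+y}{2} = -(t+\tfrac12)x - (\tfrac12-t)y$, and I arrive at
\[
F(t) = 2\sum_{0\le k,\ell<n} c_k c_\ell \int_0^\infty \ee^{-(t+1/2)x} L_{2k}(x)\int_0^x \ee^{-(1/2-t)y} L_{2\ell}(y)\,\dd y\,\dd x,
\]
which coincides with $J(t)$, since the parity identity ${(-1)}^{2n} = 1$ gives $c_k c_\ell = {(-1)}^{k+\ell}\binom{n-1}{k}\binom{n-1}{\ell}\binom{k-\tfrac12}{n}\binom{\ell-\tfrac12}{n}$.

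Combining the two displays then yields the claim. The only real care required is tracking the factors of $2$ through the two rescalings and the symmetrization; the application of Fubini's theorem and the convergence of the outer $t$-integral are unproblematic, the latter because $F(0) = 1 = \lim_{t\to 0}\tfrac{1}{1+t}$. I expect the genuine technical burden to lie ahead, when each inner Laguerre integral in $J(t)$ must be computed in closed form and the outer $t$-integration carried out explicitly.
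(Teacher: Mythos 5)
Your proposal is correct and follows essentially the same route as the paper: rescale by $2n$ to get $\En(\lL)=\log(2n)+\En\bigl(\tfrac12\mL(\cdot/2)\,\dd x\bigr)$, apply Lemma~\ref{lem:logenergy} with $p=1$, expand the density via Lemma~\ref{lem:LUE density}, and use the $x\leftrightarrow y$ symmetry to restrict to $\{y<x\}$ with a factor of~$2$, which is exactly how $J(t)$ arises in the paper's proof. The only cosmetic difference is that you perform the dilation in two steps rather than a single change of variable $x\gets x/2n$.
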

\begin{proof}
  By~\eqref{eq:levelLUE} and the change of variable $x\gets x/2n$, we have
  \[
      \En(\lL)
        =\log(2n)+\En(\nu),
    \]
    where $\nu(\dd x)\ceq\frac12\mL(\frac x2)\,\dd x$. Applying Lemma~\ref{lem:logenergy} with~$p=1$,
  it follows that
  \[
   \En(\lL)
    =\log(2n)+\gamma
    -\int_0^\infty\frac{\dd t}t
    \left(\frac1{1+t}-\frac14\iint_{{(0,\infty)}^2}\ee^{-t{\lvert x-y\rvert}}
    \mL\left(\frac x2\right)\mL\left(\frac y2\right)\,\dd x\dd y
    \right).
  \]
    The previous double integral over~${(0,\infty)}^2$
    is twice the same integral over $\{0<y<x\}$ by symmetry of~$x$ and~$y$,
    and plugging in the expression in Lemma~\ref{lem:LUE density}, we get
\begin{align*}
\frac14\iint_{{(0,\infty)}^2}\ee^{-t{\lvert x-y\rvert}}
    \rho_n\left(\frac x2\right)\rho_n\left(\frac y2\right)\dd x\dd y
=2
\sum_{0\le k,\ell<n}&
{(-1)}^{k+\ell}\binom{n-1}k\binom{n-1}\ell\binom{k-\frac12}n\binom{\ell-\frac12}n\label{eq:logterm_part}\\
&\times
    \int_0^\infty\ee^{-(t+\frac12)x}L_{2k}(x)\,\dd x
    \int_0^x\ee^{-(\frac12-t)y}L_{2\ell}(y)\,\dd y.
\end{align*}
This finishes the proof. 
\end{proof}

Next, we calculate the double integral appearing in~$J(t)$ above. 
We first need the following binomial identity. 
\begin{lemma}\label{lem:binomial_identity}
    For all integers $k,\ell,p\ge0$,
    \[\sum_{i=0}^\ell{(-1)}^{i+k}\binom{\ell}{i+p}\binom ik
    =\II_{\{k=\ell,p=0\}}+\II_{\{\ell>k,p\ge1\}}\binom{\ell-k-1}{p-1}.\]
\end{lemma}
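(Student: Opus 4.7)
The plan is to extract the sum as a coefficient in a generating function in the variable~$k$, and then reduce the resulting identity to Vandermonde's convolution via upper negation. Introduce $T_\ell(k,p)\ceq\sum_i(-1)^i\binom{\ell}{i+p}\binom{i}{k}$, so the left-hand side of the lemma is $(-1)^kT_\ell(k,p)$. Multiplying by $z^k$ and summing over $k\ge0$, the identity $\sum_k\binom{i}{k}z^k=(1+z)^i$ lets us swap the order of summation to obtain
\[\sum_{k\ge0}T_\ell(k,p)\,z^k=\sum_i(-1)^i\binom{\ell}{i+p}(1+z)^i.\]
The substitution $j=i+p$, followed by the binomial identity $\sum_{j=0}^\ell(-(1+z))^j\binom{\ell}{j}=(-z)^\ell$ applied to the tail $j\ge p$, yields the closed form
\[\sum_{k\ge0}T_\ell(k,p)\,z^k=(-1)^{p+\ell}z^\ell(1+z)^{-p}-\sum_{j=0}^{p-1}(-1)^{p+j}(1+z)^{j-p}\binom{\ell}{j}.\]

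For $p=0$ the empty sum vanishes, the generating function is simply $(-z)^\ell$, and we immediately read off $(-1)^kT_\ell(k,0)=\II_{\{k=\ell\}}$. For $p\ge1$ and $\ell>k$, the first term contributes nothing to $[z^k]$ since its lowest power of $z$ is $z^\ell$; applying the Taylor expansion $(1+z)^{-(p-j)}=\sum_{m\ge0}(-1)^m\binom{p-j+m-1}{m}z^m$ to the remaining sum extracts
\[(-1)^kT_\ell(k,p)=\sum_{j=0}^{p-1}(-1)^{p+j+1}\binom{\ell}{j}\binom{p-j+k-1}{k}.\]
The change of variable $r=p-1-j$ combined with the upper-negation identity $(-1)^r\binom{k+r}{r}=\binom{-k-1}{r}$ then rewrites this as
\[\sum_{r=0}^{p-1}\binom{-k-1}{r}\binom{\ell}{p-1-r},\]
which equals $\binom{\ell-k-1}{p-1}$ by Vandermonde's convolution, valid here because $p-1$ is a non-negative integer while the generalized indices $-k-1$ and~$\ell$ satisfy the usual coefficient-extraction identity $(1+x)^{-k-1}(1+x)^\ell=(1+x)^{\ell-k-1}$.

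The remaining cases ($\ell<k$ for any $p$, and $\ell=k$ with $p\ge1$) follow directly from the defining sum: $\binom{i}{k}$ forces $i\ge k$ while $\binom{\ell}{i+p}$ forces $i+p\le\ell$, and these constraints jointly leave no nonzero terms. The main technical obstacle is the careful bookkeeping of the various signs and the manipulation of $(1+z)^{-p}$ via its formal Taylor expansion in generalized binomial coefficients; once the coefficient is extracted and Vandermonde invoked, the identity drops out.
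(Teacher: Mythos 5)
Your argument is correct, and it takes a genuinely different route from the paper. The paper proves the case $p=0$ by Pascal (binomial) inversion and then handles $p\ge1$ by induction on $\ell$, splitting $\binom{\ell}{i+p}=\binom{\ell-1}{i+p}+\binom{\ell-1}{i+p-1}$ and reassembling the two resulting sums with Pascal's rule; that proof is elementary but requires the closed form to be guessed in advance. You instead package the sum into the generating function $\sum_k T_\ell(k,p)z^k=\sum_i(-1)^i\binom{\ell}{i+p}(1+z)^i$, evaluate it in closed form as $(-1)^{p+\ell}z^\ell(1+z)^{-p}$ minus a correction supported on $j<p$, and extract $[z^k]$; for $\ell>k$ the principal term drops out and the correction reduces, after upper negation $(-1)^r\binom{k+r}{r}=\binom{-k-1}{r}$, to a full Chu--Vandermonde convolution $\sum_{r=0}^{p-1}\binom{-k-1}{r}\binom{\ell}{p-1-r}=\binom{\ell-k-1}{p-1}$. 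I checked the sign bookkeeping ($(-1)^{p+j+1}\mapsto(-1)^r$ under $r=p-1-j$, and $p-j+k-1=k+r$) and it is right; your direct disposal of the cases $\ell<k$ and $\ell=k$, $p\ge1$ from the support constraints $i\ge k$ and $i+p\le\ell$ is also correct, and conveniently avoids having to track the cancellation between the two terms of the generating function when $k\ge\ell$. The trade-off: the paper's induction is shorter and purely finitary, while your computation is discovery-friendly (it produces the right-hand side rather than merely verifying it), treats all $k$ simultaneously, and leans on a single standard identity at the cost of some formal-power-series bookkeeping with $(1+z)^{-p}$.
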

\begin{proof}
    Let~$k\ge0$, and let $x_i\ceq{(-1)}^i\binom ik$
    and $y_i\ceq{(-1)}^k\,\II_{\{i=k\}}$ for~$i\ge0$.
    We have
    \begin{align*}
    x_\ell&=\sum_{i=0}^\ell{(-1)}^{\ell-i}\binom\ell iy_i
    \intertext{for all~$\ell\ge0$, so Pascal's inversion formula yields}
    y_\ell&=\sum_{i=0}^\ell\binom\ell ix_i=
    \sum_{i=0}^\ell{(-1)}^i\binom\ell i\binom ik.
    \end{align*}
    This proves the case~$p=0$.
    We now assume~$p\ge1$ and proceed by induction on~$\ell\ge0$. The identity
    is clear for~$\ell=0$. Let us assume~$\ell\ge1$. Then Pascal's triangle gives
    \begin{align*}
        &\sum_{i=0}^\ell{(-1)}^{i+k}\binom{\ell}{i+p}\binom ik\\
        &\qquad=\sum_{i=0}^{\ell-1}{(-1)}^{i+k}\binom{\ell-1}{i+p}\binom ik
        -\sum_{i=0}^{\ell-1}{(-1)}^{i+k}\binom{\ell-1}{i+p-1}\binom ik\\[.4em]
        &\qquad=\II_{\{\ell>k+1\}}\binom{\ell-k-2}{p-1}+
        \II_{\{\ell=k+1,p=1\}}+\II_{\{\ell>k+1,p\ge2\}}\binom{\ell-k-2}{p-2}\\[.4em]
        &\qquad=\II_{\{\ell>k\}}\binom{\ell-k-1}{p-1},
    \end{align*}
    in virtue of the base cases and the induction hypothesis. This establishes the claim.
\end{proof}

\begin{lemma}\label{lem: step2-lue}
For all $k,\ell\in\ZZ_+$ and~$t>0$, we have
\[
 \int_0^\infty\ee^{-(t+\frac12)x}L_{2k}(x)\,\dd x
    \int_0^x\ee^{-(\frac12-t)y}L_{2\ell}(y)\,\dd y= \frac{\II_{\{k=\ell\}}}{t+\frac12}-\II_{\{k>\ell\}}\frac{{\left(t-\frac12\right)}^{2k-2\ell-1}}{{\left(t+\frac12\right)}^{2k-2\ell+1}}.
\]
\end{lemma}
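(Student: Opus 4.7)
The plan is to compute this iterated integral by a generating-function calculation based on the Laguerre identity
\[
\sum_{n \geq 0} L_n(x)\, z^n = \frac{\ee^{-xz/(1-z)}}{1-z},\qquad |z|<1.
\]
Setting $a \ceq t + \tfrac12$ and $b \ceq \tfrac12 - t$, I note the crucial algebraic relation $a+b=1$ and package both Laguerre indices into the double generating series
\[
\widetilde I(z,w) \ceq \sum_{n,m \ge 0} z^n w^m \int_0^\infty \ee^{-ax} L_n(x) \left(\int_0^x \ee^{-by} L_m(y)\, \dd y\right) \dd x.
\]
For $(z,w)$ in a small neighborhood of the origin (with an auxiliary analytic continuation in $t$ to treat the case $t > \tfrac12$, where $b<0$), Fubini lets me swap sums and integrations and substitute the closed-form generating series, reducing the iterated integral in the integrand to $\iint_{0<y<x}\ee^{-\alpha x-\beta y}\,\dd x\,\dd y = 1/[\alpha(\alpha+\beta)]$ with $\alpha \ceq a + z/(1-z)$ and $\beta \ceq b + w/(1-w)$.

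The key simplification relies on $a+b = 1$, which upon clearing denominators produces the remarkable identity
\[
\alpha+\beta = \frac{1-zw}{(1-z)(1-w)},\qquad \alpha=\frac{a+(1-a)z}{1-z},
\]
so that the $(1-z)(1-w)$ in the denominators of the generating series cancel to give
\[
\widetilde I(z,w) = \frac{1-z}{(a + (1-a)z)(1-zw)}.
\]
Expanding
\[
\frac{1}{a + (1-a)z} = \sum_{p \ge 0}\frac{(a-1)^p}{a^{p+1}} z^p, \qquad \frac{1}{1-zw} = \sum_{r \ge 0} (zw)^r,
\]
and accounting for the extra $(1-z)$ factor, the coefficient of $z^n w^m$ in $\widetilde I(z,w)$ is seen to equal
\[
\frac{\II_{\{n=m\}}}{a} - \II_{\{n > m\}}\frac{(a-1)^{n-m-1}}{a^{n-m+1}},
\]
which under the specialization $n=2k,\ m=2\ell$ yields exactly the identity to be proven.

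I expect the main obstacle to be verifying the rational simplification $\alpha + \beta = (1-zw)/[(1-z)(1-w)]$: it is short but the cancellations depend delicately on $a+b=1$, and this is precisely what causes the final answer to depend only on $a=t+\tfrac12$ (and not on $b$), matching the asymmetric role of $k$ and $\ell$ in the stated formula. Everything else reduces to formal power-series bookkeeping, together with a routine Fubini justification on a small polydisk of $(z,w)$.
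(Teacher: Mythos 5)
Your argument is correct, and it takes a genuinely different route from ours. The paper proves the lemma termwise: repeated integration by parts on the inner integral, the explicit Laplace transform of $L_{2k}$, the expansion of monomials in Laguerre polynomials combined with the orthogonality relations, and finally the separate binomial identity of Lemma~\ref{lem:binomial_identity} (itself proved by Pascal inversion and induction), with $t=\frac12$ treated as a special case at the end. You instead encode both indices into a double generating function and reduce everything to a single rational identity; I checked the key simplification
\[
(a+(1-a)z)(1-w)+(b+(1-b)w)(1-z)=(a+b)+(1-a-b)(z+w)-(2-a-b)zw=1-zw
\]
under $a+b=1$, as well as the coefficient extraction from $\frac{1-z}{(a+(1-a)z)(1-zw)}$, and both are exactly right; your route has the bonus of delivering the identity for all indices $n,m$ (not only even ones) and of absorbing $t=\frac12$ with no case distinction, at the price of a Fubini step that our proof avoids. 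On that step, one caveat: the crude uniform bound $\lvert L_n(x)\rvert\le\ee^{x/2}$ is \emph{not} sufficient to dominate the double series, because $(a-\frac12)+(b-\frac12)=0$ makes $\iint_{0<y<x}\ee^{-(a-\frac12)x-(b-\frac12)y}\,\dd y\,\dd x$ diverge on every polydisk; you should instead use $\lvert L_n(x)\rvert\le L_n(-x)$ for $x\ge0$ (immediate from~\eqref{eq:laguerre_closedform}) together with the generating function evaluated at $-x$, which yields the majorant $\frac{1}{(1-\lvert z\rvert)(1-\lvert w\rvert)}\,\ee^{-(a-\epsilon_z)x-(b-\epsilon_w)y}$ with $\epsilon_z\ceq\lvert z\rvert/(1-\lvert z\rvert)$ and $\epsilon_w\ceq\lvert w\rvert/(1-\lvert w\rvert)$, integrable on $\{0<y<x\}$ as soon as $\epsilon_z<a$ and $\epsilon_z+\epsilon_w<1$. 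With that domination the interchange is valid for every $t>0$ on a fixed polydisk, so the auxiliary analytic continuation in $t$ you mention is not even needed.
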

\begin{proof}
Suppose~$t\neq\frac12$. Repeated integration by parts allows us to write
\[\int_0^x\ee^{-(\frac12-t)y}\,L_{2\ell}(y)\,\dd y
=-\sum_{p=0}^{2\ell}{\left(\frac12-t\right)}^{\!-p-1}
{\left[\ee^{-(\frac12-t)y}L^{(p)}_{2\ell}(y)\right]}_{y=0}^{y=x},\]
where~\eqref{eq:laguerre_closedform} gives the $p$-th derivative of~$L_{2\ell}$:
\[
L^{(p)}_{2\ell}(y)={(-1)}^p\sum_{i=0}^{2\ell}\frac{{(-1)}^i}{i!}\binom{2\ell}{i+p}y^i.
\]
Therefore
\begin{align*}
\int_0^x\ee^{-(\frac12-t)y}L_{2\ell}(y)\,\dd y
&=\sum_{p=0}^{2\ell}{\left(t-\frac12\right)}^{\!-p-1}
\left[
\ee^{-(\frac12-t)x}\sum_{i=0}^{2\ell}\frac{{(-1)}^i}{i!}\binom{2\ell}{i+p}
x^i-\binom{2\ell}p\right]\\[.4em]
&=\sum_{p=0}^{2\ell}\frac1{{\left(t-\frac12\right)}^{p+1}}
\sum_{i=0}^{2\ell}\frac{{(-1)}^i}{i!}\binom{2\ell}{i+p}x^i\ee^{-(\frac12-t)x}
-\frac{{\left(t+\frac12\right)}^{2\ell}}{{\left(t-\frac12\right)}^{2\ell+1}},
\end{align*}
where the last equality is due to the binomial theorem. 
Since
\begin{align*}
\int_0^\infty\ee^{-(t+\frac 12)x}L_{2k}(x)\,\dd x
&=\frac{{\left(t-\frac12\right)}^{\!2k}}{{\left(t+\frac12\right)}^{\!2k+1}}
\end{align*}
(see, e.g.,~\cite[4.11.(25), p.~174]{Erdelyi54}),
we get that for all $k,\ell\in\ZZ_+$ and~$t>0$,
\begin{align*}
    &\int_0^\infty\ee^{-(t+\frac12)x}L_{2k}(x)\,\dd x
    \int_0^x\ee^{-(\frac12-t)y}\,L_{2\ell}(y)\,\dd y\\
    &\qquad=\sum_{p=0}^{2\ell}\frac1{{\left(t-\frac12\right)}^{p+1}}
    \sum_{i=0}^{2\ell}\frac{{(-1)}^i}{i!}\binom{2\ell}{i+p}
    \int_0^\infty\ee^{-x}x^{i}\,L_{2k}(x)\,\dd x
    -\frac{{\left(t-\frac12\right)}^{2k-2\ell-1}}{{\left(t+\frac12\right)}^{2k-2\ell+1}}.
\end{align*}
Next, we combine the orthogonality relations~\eqref{eq:laguerre_orthogonality}
with the identity
\[\frac{x^i}{i!}=\sum_{j=0}^i{(-1)}^j\binom ijL_j(x)\]
(which can be derived from~\eqref{eq:laguerre_closedform} by Pascal's inversion formula)
to get 
\[\int_0^\infty\ee^{-x}x^i\,L_{2k}(x)\,\dd x=
i!\sum_{j=0}^i{(-1)}^j\binom ij\II_{\{j=2k\}}
=i!\binom i{2k}.\]
Thus
\begin{align*}
&\int_0^\infty\ee^{-(t+\frac12)x}L_{2k}(x)\,\dd x
\int_0^x\ee^{-(\frac12-t)y}L_{2\ell}(y)\,\dd y\\[.4em]
&\qquad=\sum_{p=0}^{2\ell}\frac1{{\left(t-\frac12\right)}^{p+1}}
\sum_{i=0}^{2\ell}{(-1)}^i\binom{2\ell}{i+p}\binom i{2k}
-\frac{{\left(t-\frac12\right)}^{2k-2\ell-1}}{{\left(t+\frac12\right)}^{2k-2\ell+1}}.
\end{align*}
Note that $\binom i{2k}\neq0$ implies~$\ell\ge k$,
since we are summing over $0\le i\le2\ell$.
In fact, we have
\[\sum_{i=0}^{2\ell}{(-1)}^i\binom{2\ell}{i+p}\binom i{2k}
=\II_{\{k=\ell,p=0\}}+\II_{\{\ell>k,p\ge1\}}\binom{2\ell-2k-1}{p-1}\]
by Lemma~\ref{lem:binomial_identity} above, so
\begin{align*}
\sum_{p=0}^{2\ell}\frac1{{\left(t-\frac12\right)}^{p+1}}
\sum_{i=0}^{2\ell}{(-1)}^i\binom{2\ell}{i+p}\binom i{2k}
&=\frac{\II_{\{k=\ell\}}}{t-\frac12}+\frac{\II_{\{\ell>k\}}}{{(t-\frac12)}^2}{\left(1+\frac1{t-\frac12}\right)}^{\!2\ell-2k-1}\\[.4em]
&=\frac{\II_{\{k=\ell\}}}{t-\frac12}+\II_{\{\ell>k\}}\frac{{\left(t-\frac12\right)}^{2k-2\ell-1}}{{\left(t+\frac12\right)}^{2k-2\ell+1}},
\end{align*}
where we used the binomial theorem again. 
Hence
\begin{align*}
    &\int_0^\infty\ee^{-(t+\frac12)x}L_{2k}(x)\,\dd x
    \int_0^x\ee^{-(\frac12-t)y}L_{2\ell}(y)\,\dd y\\[.4em]
    &\qquad=\frac{\II_{\{k=\ell\}}}{t-\frac12}+\II_{\{\ell>k\}}\frac{{\left(t-\frac12\right)}^{2k-2\ell-1}}{{\left(t+\frac12\right)}^{2k-2\ell+1}}
    -\frac{{\left(t-\frac12\right)}^{2k-2\ell-1}}{{\left(t+\frac12\right)}^{2k-2\ell+1}}\\[.4em]
    &\qquad=\frac{\II_{\{k=\ell\}}}{t+\frac12}-\II_{\{k>\ell\}}\frac{{\left(t-\frac12\right)}^{2k-2\ell-1}}{{\left(t+\frac12\right)}^{2k-2\ell+1}}.
\end{align*}
That this also holds for~$t=\frac12$ is anecdotal
and not difficult to check (e.g.,
using $L_{2\ell}'-L_{2\ell+1}'=L_{2\ell}$
and the orthogonality relations~\eqref{eq:laguerre_orthogonality}).
Details are left to the interested reader.
\end{proof}

Before we move to the proof of Theorem~\ref{th:laguerre}, we need the following two calculations.

\begin{lemma}\label{lem:lue_integrals}
    We have
    \[\int_0^\infty\frac{\dd t}t\left(\frac2{1+t}-\frac1{t+\frac12}\right)
        =2\log2,\]
    and for any integer~$m\ge0$,
    \[
        \int_0^\infty\frac{\dd t}t\left(\frac4{1+t}+\frac{{\left(t-\frac12\right)}^{2m-1}}{{\left(t+\frac12\right)}^{2m+1}}\right)
        =4\Bigl(\log2+2\oH_{m}\Bigr),
    \]
    where $\oH_m\ceq1+\frac13+\cdots+\frac1{2m-1}$.
\end{lemma}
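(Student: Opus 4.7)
The plan is the following. For the first identity, combine the fractions in the integrand,
\[
  \frac{2}{1+t} - \frac{1}{t+\tfrac{1}{2}} = \frac{t}{(1+t)(t+\tfrac{1}{2})},
\]
so that after the $1/t$ factor cancels, the integrand reduces to $\frac{1}{(1+t)(t+\tfrac12)}$. A partial-fraction split gives $\frac{2}{t+\tfrac12}-\frac{2}{1+t}$, with antiderivative $2\log\frac{t+\tfrac12}{1+t}$, and evaluation at $\infty$ and $0$ yields $2\log 2$.

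For the second identity, the key idea is the Möbius-type substitution $u = \frac{t-1/2}{t+1/2}$, which maps $(0,\infty)$ bijectively onto $(-1,1)$. Elementary computation gives $t+\tfrac12 = \tfrac{1}{1-u}$, $t-\tfrac12 = \tfrac{u}{1-u}$, $\tfrac{\dd t}{t} = \tfrac{2\,\dd u}{1-u^2}$, $\tfrac{1}{1+t} = \tfrac{2(1-u)}{3-u}$, and crucially
\[
  \frac{(t-\tfrac12)^{2m-1}}{(t+\tfrac12)^{2m+1}} = u^{2m-1}(1-u)^2.
\]
The whole integrand thus transforms into
\[
  \frac{16}{(1+u)(3-u)} + \frac{2u^{2m-1}(1-u)}{1+u}.
\]

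The next step is to split $\frac{16}{(1+u)(3-u)} = \frac{4}{1+u} + \frac{4}{3-u}$ by partial fractions, and to perform polynomial division on $\frac{2u^{2m-1}(1-u)}{1+u}$ using $\frac{u^n}{1+u} = \sum_{k=0}^{n-1}(-1)^{n-1-k}u^k + \frac{(-1)^n}{1+u}$, yielding
\[
  \frac{2u^{2m-1}(1-u)}{1+u} = 4\sum_{k=0}^{2m-2}(-1)^k u^k - 2u^{2m-1} - \frac{4}{1+u}.
\]
The $\pm\frac{4}{1+u}$ contributions cancel, as they must since the original integrand is integrable at $t=0$. Over $[-1,1]$ the remaining $\frac{4}{3-u}$ integrates to $4\log 2$, odd monomials $u^{2m-1}$ vanish by symmetry, and each even monomial $u^{2j}$ ($0\le j\le m-1$) contributes $\frac{8}{2j+1}$, which sum to $8\oH_m$. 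Adding yields $4(\log 2 + 2\oH_m)$.

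The main technical point is the polynomial division of $\frac{2u^{2m-1}(1-u)}{1+u}$ together with verifying the cancellation of the $\frac{1}{1+u}$ singularity; everything else is term-by-term integration of polynomials and a single log. A minor separate comment is needed for $m=0$, where $u^{2m-1}=u^{-1}$ makes the integral a Cauchy principal value; the $1/u$ contribution still integrates to zero on $(-1,1)$ by oddness, and the final answer $4\log 2$ is recovered consistently with $\oH_0 = 0$.
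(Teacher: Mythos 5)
Your proof is correct and essentially matches the paper's: your substitution $u=\tfrac{t-1/2}{t+1/2}$ is just the affine reparametrization $u=1-2v$ of the paper's $v=\tfrac1{1+2t}$, and both arguments then reduce to integrating a finite geometric-type sum of powers plus a single logarithm (the paper merely organizes the bookkeeping differently, splitting off twice the first identity instead of cancelling the $\tfrac4{1+u}$ terms directly). Your remark on the principal-value interpretation at $m=0$ is a welcome precision that the paper omits, though the lemma is only invoked there for $m=k-\ell\ge1$.
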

\begin{proof}
    The first part of the lemma is clear since
    \begin{equation}
    \int_0^\infty\frac{\dd t}t\left(\frac2{1+t}-\frac1{t+\frac12}\right)
    =2{\left[\log{\frac{1+2t}{1+t}}\right]}_{t=0}^\infty
    =2\log 2.\label{eq:log_integral}
    \end{equation}
    Next, the change of variable $u\gets1/(2+t)$ gives
    \begin{align}
    \int_0^\infty\frac{\dd t}t\left(\frac2{t+\frac12}+\frac{{\left(t-\frac12\right)}^{2m-1}}{{\left(t+\frac12\right)}^{2m+1}}\right)
    &=4\int_0^1\frac{1-u{(2u-1)}^{2m-1}}{1-u}\,\dd u\notag\\[.4em]
    &=4\int_0^1\frac{1-{(2u-1)}^{2m-1}}{1-u}\,\dd u\notag\\[.4em]
    &=8\sum_{k=0}^{2m-2}\int_0^1{(2u-1)}^k\,\dd u\notag\\[.4em]
    &=8\oH_m,\label{eq:harmonic}
    \end{align}
    using that $2\int_0^1{(2u-1)}^k\,\dd u=\bigl[1+{(-1)}^k\bigr]/(k+1)$.
    The second part of the lemma then follows by doubling~\eqref{eq:log_integral}
    and adding~\eqref{eq:harmonic}.
\end{proof}

\begin{lemma}\label{lem:harmonic_identity}
    We have
\[16\sum_{0\le k<\ell<n}
{(-1)}^{k+\ell}\binom{n-1}k\binom{n-1}\ell\binom{k-\frac12}n\binom{\ell-\frac12}n\oH_{\ell-k}
    =H_n-\frac1{2n}-\frac12,
\]
where we recall that $H_n\ceq1+\frac12+\cdots+\frac1n$ and $\oH_m\ceq1+\frac13+\cdots+\frac1{2m-1}$.
\end{lemma}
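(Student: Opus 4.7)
My plan is to reduce the double sum to a one-variable integral via the representation $\oH_m=\int_0^1\frac{1-t^{2m}}{1-t^2}\,\dd t$ and then evaluate that integral by combinatorial means. Write $a_k:=(-1)^k\binom{n-1}{k}\binom{k-\frac12}{n}$, so the left-hand side equals $S:=16\sum_{0\le k<\ell<n}a_k a_\ell\oH_{\ell-k}$. Substituting the integral representation of $\oH_{\ell-k}$, applying Fubini--Tonelli, and introducing $g(u):=\sum_{0\le k<\ell<n}a_k a_\ell u^{\ell-k}$, I obtain
\[
  S=16\int_0^1\frac{g(1)-g(t^2)}{1-t^2}\,\dd t
   =8\int_0^1\frac{g(1)-g(u)}{(1-u)\sqrt u}\,\dd u
\]
after $u=t^2$. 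Expanding $\frac{g(1)-g(u)}{1-u}=\sum_{j=0}^{n-2}T_j u^j$ with $T_j:=\sum_{0\le k\le n-2-j}\sum_{\ell=k+j+1}^{n-1}a_k a_\ell$ and using $\int_0^1 u^{j-1/2}\dd u=\frac{2}{2j+1}$ further reduces this to
\[
  S=16\sum_{j=0}^{n-2}\frac{T_j}{2j+1}.
\]

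Next I would substitute the closed form
\[
  a_k=\frac{(-1)^n(n-k)}{n\cdot 4^n}\binom{2k}{k}\binom{2n-2k}{n-k},
\]
which follows from the identity invoked in the proof of Lemma~\ref{lem:LUE density} combined with $\binom{m+\frac12}{m}=\frac{2m+1}{4^m}\binom{2m}{m}$ and the duplication $\binom{2n-2k}{n-k}=2\binom{2n-2k-1}{n-k-1}$. This central-binomial form opens the door to generating-function manipulations based on $(1-4z)^{-1/2}$ and its weighted variants; combined with Chu--Vandermonde and Gauss-type hypergeometric summations in the spirit of Lemmas~\ref{lem: calcul-gin} and~\ref{lem:calcul-GUE} (in particular the hockey-stick and finite-difference tricks used there), this should collapse $16\sum_j T_j/(2j+1)$ to the target value $H_n-\frac{1}{2n}-\frac12$.

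The main obstacle is exactly this last combinatorial bookkeeping: the interaction between the $1/(2j+1)$ weights and the quadratic central-binomial sums is delicate, and one will presumably have to split inner summations according to whether certain indices equal~$0$ and exploit finite-difference identities to annihilate polynomial contributions of degree $<n-1$. A potentially cleaner alternative is induction on~$n$: the base case $n=1$ is trivial (empty sum, and $H_1-\frac12-\frac12=0$), and the required increment $S_{n+1}-S_n=\frac{1}{2(n+1)}+\frac{1}{2n}$ could be attacked using the ratio
\[
  \frac{a_k^{(n+1)}}{a_k^{(n)}}=-\frac{n(2n-2k+1)}{2(n-k)(n+1)},
\]
though its non-trivial $k$-dependence, together with the boundary row $\ell=n$ that appears in the $(n{+}1)$-sum, makes the bookkeeping nontrivial either way.

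As a consistency check, the target value $H_n-\frac{1}{2n}-\frac12$ is exactly $-2$ times the right-hand side of Lemma~\ref{lem:calcul-GUE}. This is precisely what will yield the relation $\En^{(1)}(\lL)=2\,\En_1^{(2)}(\lH)$ of Theorem~\ref{th:laguerre}, and it strongly suggests the existence of a more conceptual proof pairing LUE and GUE directly at fixed dimension. Finding one would be more illuminating, but the direct computation sketched above is what I would pursue first.
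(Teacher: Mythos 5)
Your reduction is correct as far as it goes, but it does not actually make progress: writing $p_k\ceq2{(-1)}^{n-k}\binom{n-1}k\binom{k-\frac12}n=2{(-1)}^na_k$, one has $T_j=\frac14\sum_{\ell-k>j}p_kp_\ell$, and your final expression $16\sum_{j=0}^{n-2}T_j/(2j+1)=4\sum_{k<\ell}p_kp_\ell\sum_{j=0}^{\ell-k-1}\frac1{2j+1}$ is literally the left-hand side of the lemma again, since $\oH_m=\sum_{j=0}^{m-1}\frac1{2j+1}$. So the integral representation and the passage to the $T_j$ amount to a change of notation, and the entire content of the lemma is deferred to the sentence ``this should collapse \dots to the target value,'' which you yourself flag as the main obstacle. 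That is a genuine gap, not a detail: the interaction of the weights $\frac1{2j+1}$ with the quadratic central-binomial sums is exactly the hard part, and neither the hockey-stick/finite-difference tricks of Lemma~\ref{lem:calcul-GUE} nor the Gauss summations of Lemma~\ref{lem: calcul-gin} apply off the shelf here. The induction route fares no better for the reason you note (the ratio $a^{(n+1)}_k/a^{(n)}_k$ depends on $k$ and a new boundary row appears), so neither branch of the proposal closes.

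The missing structural input, which is how the paper proceeds, is to observe that the $p_k$ form a probability distribution (this is $\mL(0)=1$, via~\eqref{eq:pk}) and that its Cauchy--Stieltjes transform has the closed product form
\begin{equation*}
  \dE\frac1{z+X}=\frac2n\left(\frac{z+\frac12}{z}\cdots\frac{z+n-\frac12}{z+n-1}-1\right).
\end{equation*}
Combined with the telescoping identity $2\oH_{\lvert m\rvert}=\sum_{j\ge0}\bigl(\frac1{j+\frac12}-\frac1{j+\frac12+m}\bigr)$ (the discrete counterpart of your integral representation), this turns the left-hand side into $\frac2n\dE X+2\dE\oH_X$; the first term is read off from the $1/z^2$ coefficient of the transform ($\dE X=\frac{n-1}4$), and the second is obtained by differentiating Vandermonde's convolution in $x$ at $x=-\frac12$, $y=-\frac32$, giving $2\dE\oH_X=H_n-1$. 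Your closed form $a_k=\frac{{(-1)}^n(n-k)}{n\,4^n}\binom{2k}k\binom{2n-2k}{n-k}$ is correct and could be a useful starting point for an alternative hypergeometric proof, and your observation that the target is $-2$ times the right-hand side of Lemma~\ref{lem:calcul-GUE} is a genuine and intriguing remark, but neither substitutes for the evaluation itself.
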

\begin{proof}
    Let us call~$A_n$ the left-hand side.
    Observe from~\eqref{eq:laguerre_closedform} and Lemma~\ref{lem:LUE density} that
$L_k(0)=1$ and $\mL(0)=1$, that is
    \begin{equation}
        \sum_{k=0}^{n-1}p_k=1,\quad\text{where}\enspace p_k\ceq2{(-1)}^{n-k}\binom{n-1}k\binom{k-\frac12}n\ge0.\label{eq:pk}
    \end{equation}
    We also observe for any integer~$m\in\ZZ$ the telescoping summation
    \begin{equation}\sum_{j=0}^\infty\left(\frac1{j+\frac12}-\frac1{j+\frac12+m}\right)=
    \sum_{j=0}^{\lvert m\rvert-1}\frac1{j+\frac12}=2\oH_{\lvert m\rvert},\label{eq:telescoping}
    \end{equation}
    so that we can get rid of the restriction~$\ell>k$ and write
    \begin{align*}
        A_n
        &=\sum_{j=0}^\infty\sum_{0\le k,\ell<n}p_kp_\ell\left(\frac1{j+\frac12}-\frac1{j+\frac12+\ell-k}\right)
        \\[.4em]
        &=\sum_{j=0}^\infty\left(\frac1{j+\frac12}-\sum_{k=0}^{n-1}p_k\EE\frac1{j+\frac12+X-k}\right),
    \end{align*}
    where~$X$ is a random variable with law~${(p_\ell)}_{0\le\ell<n}$. We now compute the
    Cauchy-Stieltjes transform
    \[S(z)\ceq\EE{\frac1{z+X}}=\sum_{\ell=0}^{n-1}\frac{p_\ell}{z+\ell},\]
    which has simple poles at~$z=-\ell$ for $0\le\ell<n$. Written as a rational fraction,
    $S(z)=P(z)/Q(z)$ where $Q(z)\ceq z(z+1)\cdots(z+n-1)$ and,
    by uniqueness of the partial fraction decomposition, $P(z)$ is a
    polynomial of degree less than~$n$ determined by the relations
    \[p_\ell=\lim_{z\to-\ell}(z+\ell)S(z)=\frac{P(-\ell)}{\prod\limits_{\substack{0\le k<n\\k\neq\ell}}(k-\ell)}=\frac{{(-1)}^\ell P(-\ell)}{\ell!(n-1-\ell)!},\]
    for all $0\le \ell<n$, that is, after~\eqref{eq:pk},
    \begin{align*}
    P(-\ell)&=2{(-1)}^n(n-1)!\binom{\ell-\frac12}n\\[.4em]
    &=\frac2n\left(-\ell+\tfrac12\right)\cdots\left(-\ell+n-\tfrac12\right).
    \end{align*}
    This entails the equality of polynomials $P(z)=\frac2n\bigl((z+\frac12)\cdots(z+n-\frac12)-Q(z)\bigr)$,
    and we have thus found
    \begin{equation}
        S(z)=\EE{\frac1{z+X}}=\frac2n\Bigl(R(z)-1\Bigr),\quad\text{where}\enspace
    R(z)\ceq\frac{z+\frac12}z\times\cdots\times\frac{z+n-\frac12}{z+n-1},\label{eq:stieltjes}
    \end{equation}
    valid for $-z\notin\{0,\ldots,n-1\}$.
    As a result, 
    noting that $S(j+\frac12-k)=-\frac2n$ for $j<k<n$,
    \begin{align*}
        A_n
        &=\EE{\sum_{j=0}^\infty\left(\frac1{j+\frac12}-S(j+\tfrac12-X)\right)}\\[.4em]
        &=\EE{\left[\sum_{j=0}^{X-1}\left(\frac1{j+\frac12}+\frac2n\right)+\sum_{j=0}^\infty\left(\frac1{j+\frac12+X}-S(j+\tfrac12)\right)\right]}\\[.4em]
        &=\frac2n\EE X+2\EE{\oH_X},
    \end{align*}
    by~\eqref{eq:telescoping}. To compute~$\EE X$, we let~$z\to\infty$ in
    \[\EE{\frac{zX}{z+X}}=z\left(1-\EE{\frac z{z+X}}\right)=z\Bigl(1-zS(z)\Bigr),\]
    where, from~\eqref{eq:stieltjes},
    \[S(z)=\frac1z-\frac{n-1}{4z^2}+o\left(\frac1{z^2}\right).\]
    Thus $\EE X=\frac{n-1}4$. It remains to evaluate $2\EE{\oH_X}$.
    In this direction, we rewrite
    \[p_k\ceq2{(-1)}^{n-k}\binom{n-1}k\binom{k-\frac12}n=\frac{{(-1)}^{n-1}}n\binom{-\frac12}k\binom{-\frac32}{n-1-k},\]
    and differentiate Vandermonde's convolution identity~\cite[(3.1)]{Gould72}
    \[\sum_{k=0}^{n-1}\binom xk\binom y{n-1-k}=\binom{x+y}{n-1},\quad x,y\in\RR,\]
    with respect to~$x$ to get (recall~\eqref{eq:binomial})
    \[\sum_{k=0}^{n-1}\binom xk\binom y{n-1-k}\sum_{i=0}^{k-1}\frac1{x-i}=\binom{x+y}{n-1}\sum_{i=0}^{n-2}\frac1{x+y-i}.\]
    Plugging in $x\ceq-\frac12$ and $y\ceq-\frac32$ produces
    \[\sum_{k=0}^{n-1}\binom{-\frac12}k\binom{-\frac32}{n-1-k}\sum_{i=0}^{k-1}\frac1{i+\frac12}
    =\binom{-2}{n-1}\sum_{i=0}^{n-2}\frac1{i+2}
    ={(-1)}^{n-1}\,n\Bigl(H_n-1\Bigr),\]
    hence
    \[2\EE{\oH_X}=H_n-1.\]
    We conclude that
    \[A_n=\frac2n\EE X+2\EE{\oH_X}=\frac{n-1}{2n}+H_n-1=H_n-\frac12-\frac1{2n},\]
    as stated.
\end{proof}

We are now in place to prove Theorem~\ref{th:laguerre}. 
\begin{proof}[Proof of Theorem~\ref{th:laguerre}]
We start by combining Lemmas~\ref{lem: step1-lue} and~\ref{lem: step2-lue} to write 
\[
\En(\lL)
=\log(2n)+\gamma
    -\int_0^\infty\frac{\dd t}t
    \left(\frac1{1+t}-J(t)
    \right),
\]
where 
\[
J(t)=2\sum_{0\le k,\ell<n}
    {(-1)}^{k+\ell}\binom{n-1}k\binom{n-1}\ell\binom{k-\frac12}n\binom{\ell-\frac12}n \left\{
    \frac{\II_{\{k=\ell\}}}{t+\frac12}-\II_{\{k>\ell\}}\frac{{\left(t-\frac12\right)}^{2k-2\ell-1}}{{\left(t+\frac12\right)}^{2k-2\ell+1}}\right\}.
\]
Recall from~\eqref{eq:pk} that
\begin{align}
2\sum_{k=0}^{n-1}
{(-1)}^{n-k}\binom{2k}k\binom{k-\frac12}n&=1,&&&&\label{eq:lue_sum}
\intertext{whence, by squaring,}
4\sum_{0\le k,\ell<n}
{(-1)}^{k+\ell}\binom{n-1}k\binom{n-1}\ell\binom{k-\frac12}n\binom{\ell-\frac12}n
&=1.\label{eq:lue_doublesum}
\end{align}
Exploiting the symmetry in~$k$ and~$\ell$, we write
\begin{equation}
\bullet=2\sum_{0\le k,\ell<n}
{(-1)}^{k+\ell}\binom{n-1}k\binom{n-1}\ell\binom{k-\frac12}n\binom{\ell-\frac12}n
\Bigl[\II_{\{k=\ell\}}{2\bullet}+\II_{\{k>\ell\}}4\bullet\Bigr],
\label{eq:lue_doublesum2}
\end{equation}
to get
\begin{align*}
\int\frac{\dd t}t
    \left(\frac1{1+t}-J(t)
    \right)&=2\sum_{0\le k,\ell<n}
{(-1)}^{k+\ell}\binom{n-1}k\binom{n-1}\ell\binom{k-\frac12}n\binom{\ell-\frac12}n\\[.4em]
&\quad\times\int_0^\infty\frac{\dd t}t\left\{\II_{\{k=\ell\}}\left(\frac2{1+t}-\frac 1{t+\frac12}\right)
+\II_{\{k>\ell\}}\left(\frac4{1+t}+\frac{{\left(t-\frac12\right)}^{2k-2\ell-1}}{{\left(t+\frac12\right)}^{2k-2\ell+1}}\right)\right\}.
\end{align*}
We gather from Lemma~\ref{lem:lue_integrals} the values
\begin{align*}
\int_0^\infty\frac{\dd t}t\left(\frac2{1+t}-\frac1{t+\frac12}\right)
&=2\log 2,
\intertext{and, for all integers~$k>\ell$,}
\int_0^\infty\frac{\dd t}t\left(\frac4{1+t}+\frac{{\left(t-\frac12\right)}^{2k-2\ell-1}}{{\left(t+\frac12\right)}^{2k-2\ell+1}}\right)
&=4\Bigl(\log2+2\oH_{k-\ell}\Bigr),
\end{align*}
where~$\oH_m\ceq\sum_{i=1}^m\frac1{2i-1}$. Factorizing out~$\log2$ and recalling~\eqref{eq:lue_doublesum}
and~\eqref{eq:lue_doublesum2}, we then arrive at
\begin{align*}
\En(\lL)
&=\gamma+\log n-16\sum_{0\le k<\ell<n}
{(-1)}^{k+\ell}\binom{n-1}k\binom{n-1}\ell\binom{k-\frac12}n\binom{\ell-\frac12}n\oH_{\ell-k}.
\end{align*}
We conclude using  Lemma~\ref{lem:harmonic_identity}
that
\[\En(\lL)=\frac12-\left(H_n-\log n-\gamma-\frac1{2n}\right)=2\En(\lH),\]
giving $\En^{(1)}(\lL)=1+2\En(\lH)=2\En^{(2)}_2(\lH)$.
\end{proof}

\section{Numerical experiments}\label{se:numexp}
We include Monte Carlo simulations supporting positive answers to
Questions~\ref{qu: semi-circle},~\ref{qu: circle}, and~\ref{qu: MP}.
We consider several centered distributions for the matrix
coeffcients $\xi_{ij}$, according to the table below:
$\textsf{Rademacher}$, symmetric generalized gamma
$\textsf{SGG}(d,p)$ with $d,p>0$, and heavy-tailed $\textsf{Heavy}(\alpha)$
with~$\alpha>2$. The coefficients are always normalized so that $\var(\xi_{ij})=1$. In the discrete case (\textsf{Rademacher}), simulations resulting in an infinite value of~$\En$ have been rejected. 

\medskip

\begin{center}
\begin{tabular}{c|c|c}
  \textbf{Distribution} & \textbf{Support} & \textbf{Density}\\[.5em]
  \hline\hline&\\
  $\textsf{Rademacher}$ & $\{-1,1\}$ & $\frac12$\\[1em]
  \hline&\\
  $\textsf{SGG}(d,p)$ & $\RR$ & $\frac p{2\Gamma(\frac dp)}\,{\lvert x\rvert}^{d-1}\ee^{-{\lvert x\rvert}^p}$\\[1em]
  \hline&\\
  $\textsf{Heavy}(\alpha)$ & $\RR$ & $\frac\alpha 2\,{(1+\lvert x\rvert)}^{-\alpha-1}$\\[1em]
\end{tabular}
\end{center}

\medskip

\subsection{Positive evidence to Question~\ref{qu: semi-circle}}
Figure~\ref{fig:wigner} 
represents, for different distributions of the matrix coefficients~$\xi_{ij}$ above,
Monte Carlo simulations of the logarithmic energy
$\En^{(2)}_1(\dE\mu_{\frac1{\sqrt n}M_n})$
associated with the random
Wigner matrix $M_n\ceq(\xi_{ij})_{1\le i,j\le n}$.

\begin{figure}[p]
  \includegraphics[width=\textwidth]{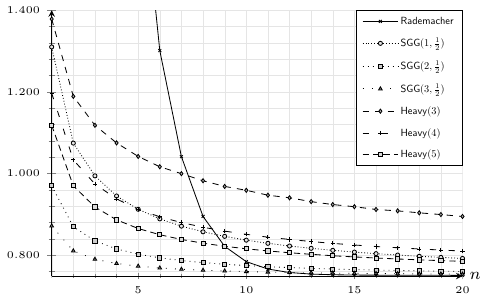}
  \caption{Monte Carlo simulations of the logarithmic energy $\En^{(2)}_1(\dE\mu_{\frac1{\sqrt n}M_n})$
  for several examples of random Wigner matrices~$M_n$.}\label{fig:wigner}
\end{figure}

We conjecture that Question~\ref{qu: semi-circle}
has a positive answer beyond the scope of Wigner matrices.
Figure~\ref{fig:hBeta} displays,
for different values of~$\beta>0$,
Monte Carlo simulations of the logarithmic energy
$\En^{(2)}_1(\dE\mu_{H_n^{(\beta)}})$ where
$H_n^{(\beta)}$ is the (normalized) tridiagonal matrix model
of the $\beta$-Hermite Gaussian ensemble~\cite[\S~5.2]{Dumitriu03}.
Specifically,
\[H_n^{(\beta)}\sim\frac1{\sqrt{2+\beta(n-1)}}
\begin{pmatrix}
  a_1&b_1\\
  b_1&a_2&b_2\\
  &\ddots&\ddots&\ddots\\
  &&b_{n-2}&a_{n-1}&b_{n-1}\\
  &&&b_{n-1}&a_n
\end{pmatrix},
\]
where $a_1,\ldots,a_n,b_1,\ldots,b_{n-1}$ are mutually independent
random variables,
$a_i$ has the normal~$\cN(0,2)$ distribution and~%
$b_i$ has the chi distribution with parameter $(n-i)\beta$.
We recall that~$H_n^{(2)}$ shares the same eigenvalue distribution as
$A_n^{\GUE}$, see~\cite[Theorem~5.2.1]{Dumitriu03}.

\begin{figure}[p]
  \includegraphics[width=\textwidth]{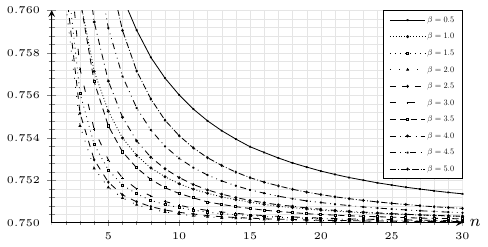}
  \caption{Monte Carlo simulations of the logarithmic energy $\En^{(2)}_1(\dE\mu_{H^{(\beta)}_n})$
  associated with the $\beta$-Hermite Gaussian ensemble $H^{(\beta)}_n$
  for different values of~$\beta$.\label{fig:hBeta}}
\end{figure}

\subsection{Positive evidence to Question~\ref{qu: circle}}
Figure~\ref{fig:girko}
represents,
for different distributions of the matrix coefficients~$\xi_{ij}$,
Monte Carlo simulations of the logarithmic energy
$\En^{(2)}_{\frac12}(\dE\mu_{\frac1{\sqrt n}M_n})$,
associated with the i.i.d.\ matrix
$M_n\ceq(\xi_{ij})_{1\le i,j\le n}$.

\begin{figure}[p]
  \includegraphics[width=\textwidth]{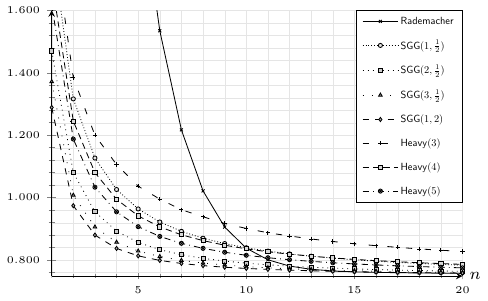}
  \caption{Monte Carlo simulations of the logarithmic energy $\En^{(2)}_{\frac12}(\dE\mu_{\frac1{\sqrt n}M_n})$
  for several examples of random i.i.d.\ matrices~$M_n$. Note that $\textsf{SGG}(1,2)$ corresponds to the real Ginibre ensemble.}\label{fig:girko}
\end{figure}

\subsection{Positive evidence to Question~\ref{qu: MP}}
Figure~\ref{fig:mp}
represents,
for different distributions of the matrix coefficients~$\xi_{ij}$,
Monte Carlo simulations of the logarithmic energy
$\En^{(1)}(\dE\mu_{\frac1nX_nX_n^*})$
where
$X_n\ceq(\xi_{ij})_{1\le i,j\le n}$.

\begin{figure}[p]
  \includegraphics[width=\textwidth]{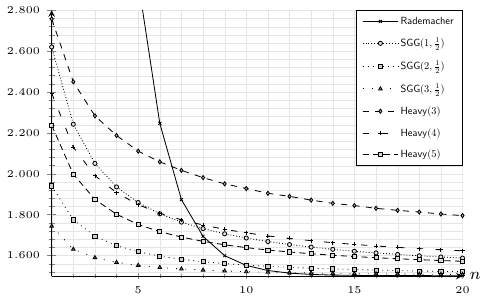}
  \caption{Monte Carlo simulations of the logarithmic energy $\En^{(1)}(\dE\mu_{\frac1nX_nX_n^*})$
  for several examples of random i.i.d.\ matrices~$X_n$.\label{fig:mp}}
\end{figure}

\bibliographystyle{abbrv}
\bibliography{monotonicity}

\end{document}